\long\def\ignore#1{}
\begin{document}

\newtheorem{theorem}{Theorem}
\newtheorem{prop}[theorem]{Proposition}
\newtheorem{lemma}[theorem]{Lemma}
\newtheorem{claim}{Claim}
\newtheorem{corollary}[theorem]{Corollary}
\theorembodyfont{\rmfamily}
\newtheorem{remark}[theorem]{Remark}
\newtheorem{conjecture}[theorem]{Conjecture}
\newtheorem{problem}[theorem]{Problem}
\newtheorem{step}{Step}
\newtheorem{alg}{Algorithm}
\newenvironment{proof}{\medskip\noindent{\bf Proof. }}{\hfill$\square$\medskip}

\def\R{\mathbb{R}}
\def\one{\mathbbm1}
\def\T{^{\sf T}}
\def\HH{{\cal H}}
\def\SS{{\cal S}}
\def\II{{\cal I}}
\def\FF{{\cal F}}
\def\AA{{\cal A}}
\def\EE{{\cal E}}
\def\LL{{\cal L}}
\def\PP{{\cal P}}
\def\CC{{\cal C}}
\def\MM{{\cal M}}
\def\XX{{\cal X}}
\def\WW{{\cal W}}
\def\P{{\sf P}}
\def\E{{\sf E}}
\def\Q{{\mathbf Q}}
\def\bd{\text{bd}}
\def\eps{\varepsilon}

\newcommand{\NIET}[1]{}
\newcommand{\bx}{\hspace*{\fill} \hbox{\hskip 1pt \vrule width 4pt height 8pt depth 1.5pt \hskip 1pt}

\addvspace{4mm}}
\newcommand{\openbx}{\hspace*{\fill} $\Box$\\ \vspace{1mm}}

\def\wh{\widehat}
\def\cork{\text{\rm corank}}
\def\rank{\text{\rm rank}}
\def\Ker{\text{\rm Ker}}
\def\rk{\text{\rm rank}}
\def\supp{\text{\rm supp}}
\def\diag{\text{\rm diag}}

\begin{center}

{\Large\bf Nullspace embeddings for outerplanar graphs}\\[12mm]
L\'{a}szl\'o Lov\'{a}sz\footnote{
The research leading to these results has received funding from the European Research Council
under the European Union's Seventh Framework Programme (FP7/2007-2013) / ERC grant agreement
n$\mbox{}^{\circ}$ 227701.
}\\
E\"otv\"os Lor\'and University, Budapest, Hungary
\\
and \\
Alexander Schrijver\footnote{
The research leading to these results has received funding from the European Research Council
under the European Union's Seventh Framework Programme (FP7/2007-2013) / ERC grant agreement
n$\mbox{}^{\circ}$ 339109.
}\\
University of Amsterdam and CWI, Amsterdam, The Netherlands\\[1cm]

{\it Dedicated to the memory of Ji\v{r}\'{\i} Matou\v{s}ek}
\end{center}

\tableofcontents

\begin{abstract}
We study relations between geometric embeddings of graphs and the
spectrum of associated matrices, focusing on outerplanar embeddings
of graphs. For a simple connected graph $G=(V,E)$, we define a
``good'' $G$-matrix as a $V\times V$ matrix with negative entries
corresponding to adjacent nodes, zero entries corresponding to
distinct nonadjacent nodes, and exactly one negative eigenvalue. We
give an algorithmic proof of the fact that it $G$ is a 2-connected
graph, then either the nullspace representation defined by any
``good'' $G$-matrix with corank 2 is an outerplanar embedding of $G$,
or else there exists a ``good'' $G$-matrix with corank 3.
\end{abstract}

\section{Introduction}

We study relations between geometric embeddings of graphs, the
spectrum of associated matrices and their signature, and topological
properties of associated cell complexes. We focus in particular on
$1$-dimensional and $2$-dimensional embeddings of graphs, in the hope
that the techniques can be extended to higher dimensions.

\medskip

{\bf Spectral parameters of graphs.} The basic connection between
graphs, matrices, and geometric embeddings considered in this paper
can be described as follows. We define a {\it $G$-matrix} for an
undirected graph $G=(V,E)$ as a symmetric real-valued $V\times V$
matrix that has a zero in position $(i,j)$ if $i$ and $j$ are
distinct nonadjacent nodes. The matrix is {\it well-signed} if
$M_{ij}<0$ if $i$ and $j$ are distinct adjacent nodes. (There is no
condition on the diagonal entries.)  If, in addition, $M$ has exactly
one negative eigenvalue, then let us call it {\it good} (for the
purposes of this introduction). Let $\kappa(G)$ denote the largest
$d$ for which there exists a good $G$-matrix.

The parameter $\kappa$ is closely tied to certain topological
properties of the graph. Combining results of \cite{CdV}, \cite{HH},
\cite{RST}, \cite{LS1} and \cite{SS}, one gets the following facts:
\begin{quote}
If $G$ is connected, then $\kappa(G)\leq 1\Leftrightarrow G$ is a path,\\
If $G$ is $2$-connected, then $\kappa(G)\leq 2\Leftrightarrow G$ is outerplanar,\\
If $G$ is $3$-connected, then $\kappa(G)\leq 3\Leftrightarrow G$ is planar,\\
If $G$ is $4$-connected, then $\kappa(G)\leq 4\Leftrightarrow G$ is
linklessly embeddable.
\end{quote}

We study algorithmic aspects of the first two facts. Let us discuss
here the second, which says that if $G$ is a $2$-connected graph,
then either it has an embedding in the plane as an outerplanar map,
or else there exists a good $G$-matrix with corank 3 (and so the
graph is not outerplanar). To construct an outerplanar embedding, we
use the nullspace of any good $G$-matrix with corank $2$.

\medskip

{\bf Nullspace representations.} To describe this construction,
suppose that a $V\times V$ matrix $M$ has corank $d$. Let
$U\in\R^{d\times n}$ be a matrix whose rows form a basis of the
nullspace of $M$. This matrix satisfies the equation
\begin{equation}\label{EQ:NULLSP}
UM=0,
\end{equation}
where $U$ is a $d\times n$ matrix of rank $d$ and $M$ is a $G$-matrix
with corank $d$. Let $u_i$ be the column of $U$ corresponding to node
$i\in V$. The mapping $u:~V\to\R^d$ is called the {\it nullspace
representation of $V$ defined by $M$}. It is unique up to linear
transformations of $\R^d$. (For the purist: the map $V\to\ker(M)^*$
is canonically defined; choosing the basis in $\ker(M)$ just
identifies $\ker(M)$ with $\R^d$.)

If $G=(V,E)$ is a graph and $u:~V\to\R^d$ is any map, we can extend
it to the edges by mapping the edge $ij$ to the straight line segment
between $u_i$ and $u_j$. If $u$ is the nullspace representation of
$V$ defined by $M$, then this extension gives the {\it nullspace
representation of} $G$ {\it defined by} $M$.

In this paper we give algorithmic proofs of two facts:
\begin{enumerate}
\item[(1)] If $G$ is connected graph with $\kappa(G)=1$, then the
    nullspace representation defined by any well-signed
    $G$-matrix $M$ with one negative eigenvalue and with corank
    $1$ yields an embedding of $G$ in the line.

\item[(2)] If $G$ is $2$-connected and $\kappa(G)=2$, then the
    nullspace representation defined by any well-signed
    $G$-matrix $M$ with one negative eigenvalue and with corank
    $2$ yields an outerplanar embedding of $G$.
\end{enumerate}
The proofs are algorithmic in the sense that (say, in the case of
(2)) for every $2$-connected graph we either construct an outerplanar
embedding or a good $G$-matrix with corank $3$ in polynomial time.
The alternative proof that can be derived from the results of
\cite{LS2} uses the minor-monotonicity of the Colin de Verdi\`ere
parameter (see below), and this way it involves repeated reference to
the Implicit Function Theorem, and does not seem to be implementable
in polynomial time. The word "yields" above hides some issues
concerning normalization, to be discussed later.

Paper \cite{LS2} also contains the analogous result for planar
graphs, which was extended in \cite{L}:
\begin{enumerate}
\item[(3)] If $G$ is $3$-connected and $\kappa(G)=3$, then the
    nullspace representation defined by any well-signed
    $G$-matrix with one negative eigenvalue and with corank $2$
    yields a representation of $G$ as the skeleton of a convex
    $3$-polytope.
\end{enumerate}
Again, the proof uses the minor-monotonicity of the Colin de
Verdi\`ere parameter and the Implicit Function Theorem, and thus it
is not algorithmic. It would be interesting to see whether our
approach can be extended to an algorithmic proof for dimension 3.
(While we focus on the case $\kappa(G)=2$, some of our results do
bear upon higher dimensions, in particular the results in Section
\ref{26ok15a} below.)

A further extension to dimension $4$ would be particularly
interesting, since $4$-connected linklessly embeddable graphs are
characterized by the property that $\kappa(G)\le 4$, but it is not
known whether the nullspace representation obtained from a good
$G$-matrix of corank $4$ yields a linkless embedding of the graph.

\medskip

{\bf The Strong Arnold Hypothesis and the Colin de Verdi\`ere
number.} We conclude this introduction with a discussion of the
connection between the parameter $\kappa(G)$ and the graph parameter
$\mu(G)$ introduced by Colin de Verdi\`ere \cite{CdV}. This latter is
defined similarly to $\kappa$ as the maximum corank of a good
$G$-matrix $M$, where it is required, in addition, that $M$ has a
nondegeneracy property called the {\it Strong Arnold Property}. There
are several equivalent forms of this property; let us formulate one
that is related to our considerations in the sense that it uses any
nullspace representation $u$ defined by $M$: if a symmetric $d\times
d$ matrix $N$ satisfies $u_i\T Nu_i=0$ for all $i\in V$ and $u_i\T
Nu_j=0$ for each edge $ij$ of $G$, then $N=0$. In more geometric
terms this means that the nullspace representation of the graph
defined by $M$ is not contained in any nontrivial homogeneous
quadric.

The relationship between $\mu$ and $\kappa$ is not completely
clarified. Trivially $\mu(G)\leq\kappa(G)$. Equality does not hold in
general: consider the graph $G_{l,m}$ made from an $(l+m)$-clique by
removing the edges of an $m$-clique. If $l\geq 1$ and $m\geq 3$, then
$\mu(G_{l,m})=l+1$ whereas $\kappa(G_{l,m})=l+m-2$. (Note that
$G_{l,m}$ is not $l+1$-connected.)

Colin de Verdi\`ere's parameter has several advantages over $\kappa$.
First, it is minor-monotone, while $\kappa(G)$ is not minor-monotone,
not even subgraph-monotone: any path $P$ satisfies $\kappa(P)\leq 1$,
but a disjoint union of paths can have arbitrarily large $\kappa(G)$.
Furthermore, the connection with topological properties of graphs
holds for $\mu$ without connectivity conditions:
\begin{quote}
$\mu(G)\leq 1$ $\Leftrightarrow$ $G$ is a disjoint union of paths,\\
$\mu(G)\leq 2$ $\Leftrightarrow$ $G$ is outerplanar,\\
$\mu(G)\leq 3$ $\Leftrightarrow$ $G$ is planar,\\
$\mu(G)\leq 4$ $\Leftrightarrow$ $G$ is linklessly embeddable in
$\R^3$.
\end{quote}
Our use of $\kappa$ is motivated by its easier definition and by the
(slightly) stronger, algorithmic results.

We see from the facts above that by requiring that $G$ is
$\mu(G)$-connected, we have $\mu(G)=\kappa(G)$ for $\mu(G)\le 4$. In
fact, it was shown by Van der Holst \cite{HolstThesis} that if $G$ is
$2$-connected outerplanar or $3$-connected planar, then {\it every}
good $G$-matrix has the Strong Arnold Property. This also holds true
for 4-connected linklessly embeddable graphs \cite{SS}. One may
wonder whether this remains true for $\mu(G)$-connected graphs with
larger $\mu(G)$. This would imply that $\mu(G)=\kappa(G)$ for every
$\mu(G)$-connected graph.

\section{$G$-matrices}

\subsection{Nullspace representations}

Let us fix a connected graph $G=(V,E)$ on node set $V=[n]$, and an
integer $d\ge 1$. We denote by $\WW=\WW(G,d)$ the set of well-signed
$G$-matrices with corank at least $d$, and by $\WW_0=\WW_0(G,d)$, the
set of well-signed $G$-matrices with corank exactly $d$. We define
$\WW'=\WW'(G,d)$ as the set of $G$-matrices in $\WW(G,d)$ with
exactly one negative eigenvalue (of multiplicity 1). We denote by
$\MM_u$ the linear space of $G$-matrices $M$ with $UM=0$, by $\WW_u$,
the set of well-signed $G$-matrices in $\MM_u$, and by $\WW'_u$, the
set of matrices in $\WW_u$ with exactly one negative eigenvalue.

We can always perform a linear transformation of $\R^d$, i.e.,
replace $U$ by $AU$, where $A$ is any nonsingular $d\times d$ matrix.
In the case when $\cork(M)=d$ (which will be the important case for
us), the matrix $U$ is determined by $M$ up to such a linear
transformation of $\R^d$.

Another simple transformation we use is ``node scaling'': replacing
$U$ by $U'=UD$ and $M$ by $M'=D^{-1}MD^{-1}$, where $D$ is a
nonsingular diagonal matrix. Then $M'$ is a $G$-matrix and $U'M'=0$.
Through this transformation, we may assume that every nonzero vector
$u_i$ has unit length. We call such a representation {\em
normalized}.

One of our main tools will be to describe more explicit solutions of
the basic equation \eqref{EQ:NULLSP} in dimensions $1$ and $2$. More
precisely, given a graph $G=(V,E)$ and a representation
$U:~V\to\R^2$, our goal is to describe all $G$-matrices $M$ with
$UM=0$. Note that it suffices to find the off-diagonal entries: if
$M_{ij}$ is given for $ij\in E$ in such a way that
\begin{equation}\label{EQ:PAR}
\sum_{j\in N(i)} M_{ij}u_j ~\parallel~ u_i,
\end{equation}
then there is a unique choice of diagonal entries $M_{ii}$ that gives
a matrix with $UM=0$:
\begin{equation}\label{8ok15d}
M_{ii}=-\sum_j M_{ij}\frac{u_j\T u_i}{u_i\T u_i}.
\end{equation}

\subsection{$G$-matrices and eigenvalues\label{26ok15a}}

In this section we consider eigenvalues of well-signed $G$-matrices;
we consider the connected graph $G$ and the dimension parameter $d$
fixed. We start with a couple of simple observations.

\begin{lemma}\label{PROP:CONVHULL1}
Let $M$ be a well-signed $G$-matrix with corank $d\ge 1$ and let
$U\in\R^{d\times n}$ such that $UM=0$ and $\rank(U)=d$.

\smallskip

{\rm(a)} If $M$ is positive semidefinite, then $d=1$, and all entries
of $U$ are nonzero and have the same sign.

\smallskip

{\rm(b)} If $M$ has a negative eigenvalue, then the origin is an
interior point of the convex hull of the columns of $U$.
\end{lemma}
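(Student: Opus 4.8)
The plan is to treat the two parts separately, using the eigenvalue/signature hypotheses to control the geometry of the columns $u_i$ of $U$.

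For part (a), I would argue as follows. Suppose $M$ is positive semidefinite with corank $d$. Since $M$ is well-signed, the off-diagonal entries corresponding to edges are strictly negative. I want to exploit the fact that a PSD matrix whose off-diagonal pattern is that of a connected graph, with all edge-entries of the same (negative) sign, behaves like a weighted Laplacian: the classical fact here is that an irreducible PSD matrix with nonpositive off-diagonal entries has a one-dimensional kernel, spanned by a strictly positive vector (this is essentially Perron--Frobenius applied to $cI - M$ for large $c$, or the standard argument that the quadratic form $x\T M x$, when written out, forces any null vector to be ``flat'' across each edge up to the positive edge weights and hence constant in sign on a connected graph). Thus $d = \cork(M) = 1$. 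Then $U$ is a single row vector $u\T$ spanning $\Ker(M)$, and by the same Perron--Frobenius/irreducibility statement this null vector can be taken with all entries strictly positive; since $U$ is determined only up to scaling, all entries of $U$ are nonzero of the same sign. I should double-check the reduction to the irreducible case: $G$ is connected, so $M$ is irreducible as a matrix (the nonzero off-diagonal pattern is exactly the connected graph $G$), which is all that is needed.

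For part (b), suppose $M$ has a negative eigenvalue; since $UM = 0$, the origin is certainly in the convex hull of the columns (indeed, $U M = 0$ restricted to any column-combination statement gives $\sum_i (u_i) (\text{something}) = 0$), but I need it to be an \emph{interior} point. The key step is this: if the origin were not interior to $\mathrm{conv}\{u_i : i \in V\}$, then there is a nonzero linear functional, i.e. a vector $a \in \R^d$, with $a\T u_i \ge 0$ for all $i$, and not all of these equal to zero. Set $x = U\T a \in \R^n$, so $x_i = a\T u_i \ge 0$, $x \neq 0$. Now I would compute $x\T M x$. Using $M U\T = 0$ (the transpose of $UM=0$, since $M$ is symmetric), one gets $M x = M U\T a = 0$, so $x \in \Ker(M)$ and in particular $x\T M x = 0$. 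But $x$ is a nonzero, entrywise-nonnegative vector in the kernel. Now I revisit part (a)'s mechanism: consider the quadratic form restricted appropriately, or rather argue directly that a nonzero nonnegative null vector forces $M$ to be PSD. Concretely, for any $y$, write $y = y^+ - y^-$; since off-diagonal edge entries are negative, $y\T M y \ge (\text{stuff})$ — here is where I expect the main obstacle: I need to show that the existence of a nonnegative kernel vector $x$ of a well-signed $G$-matrix (with $G$ connected, hence $M$ irreducible) forces $M \succeq 0$, which then contradicts the assumption that $M$ has a negative eigenvalue.

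The cleanest way around that obstacle, and the route I would actually take, is again Perron--Frobenius. For $c$ large, $N := cI - M$ has nonnegative entries and is irreducible (connectedness of $G$), so it has a simple largest eigenvalue $\lambda_{\max}(N) = c - \lambda_{\min}(M)$ with a strictly positive eigenvector $v$. If $M$ has a negative eigenvalue, then $\lambda_{\min}(M) < 0$, so $\lambda_{\max}(N) > c$. The nonnegative vector $x$ from the previous paragraph lies in $\Ker(M)$, i.e. $Nx = cx$; but a nonnegative eigenvector of the irreducible nonnegative matrix $N$ must be the Perron eigenvector (up to scaling) — a standard Perron--Frobenius fact — hence $x$ is a positive multiple of $v$ and $c = \lambda_{\max}(N) > c$, a contradiction. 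Therefore no such separating $a$ exists, which is exactly the statement that the origin is interior to the convex hull of the columns of $U$. I would present part (b) in this order: (i) reduce non-interiority to the existence of a nonnegative nonzero kernel vector $x = U\T a$; (ii) invoke Perron--Frobenius on $cI - M$ to see that any nonnegative kernel vector would have to be the strictly positive Perron vector, forcing $\lambda_{\min}(M) \ge 0$; (iii) conclude. The same Perron--Frobenius input also cleans up part (a), so I would prove the Perron--Frobenius statement for irreducible $G$-matrices once as a preliminary remark and cite it in both parts.
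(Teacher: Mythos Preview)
Your proposal is correct and rests on the same key input as the paper: the Perron--Frobenius theorem applied to the irreducible matrix $M$ (equivalently to $cI-M$), giving a simple smallest eigenvalue $\lambda$ with a strictly positive eigenvector $v$. Part (a) is identical to the paper's argument. For part (b) the paper is slightly more direct: instead of arguing by contradiction via a separating hyperplane and a nonnegative kernel vector, it simply observes that when $\lambda<0$ the rows of $U$ (lying in $\ker(M)$) are orthogonal to $v$, so $Uv=0$, i.e.\ $\sum_i v_i u_i=0$ with all $v_i>0$, which exhibits the origin as an interior point of the convex hull of the columns. Your contrapositive route and the paper's constructive route are dual to one another and equally valid; the paper's version is just a line shorter.
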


\begin{proof}
Let $\lambda$ be the smallest eigenvalue of $M$. As $G$ is connected,
$\lambda$ has multiplicity one by the Perron--Frobenius theorem, and
$M$ has a positive eigenvector $v$ belonging to $\lambda$. If
$\lambda=0$, then this multiplicity is $d=1$, and $U$ consists of a
single row parallel to $v$. If $\lambda<0$, then every row of $U$,
being in the nullspace of $M$, is orthogonal to $v$. Thus the entries
of $v$ provide a representation of $0$ as a convex combination of the
columns of $U$ with positive coefficients.
\end{proof}

\begin{lemma}\label{SHIFT-NEW}
If $d\geq 2$, then the set $\WW'$ is relatively closed in $\WW$, and
$\WW'\cap \WW_0$ is relatively open in $\WW$.
\end{lemma}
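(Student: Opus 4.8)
The plan is to reduce both statements to the continuity of the eigenvalues of a symmetric matrix. Write $\lambda_1(M)\le\cdots\le\lambda_n(M)$ for the eigenvalues of a symmetric $V\times V$ matrix $M$ in nondecreasing order; these are continuous functions of $M$, so the number of strictly negative (resp.\ strictly positive) eigenvalues of $M$ is lower semicontinuous in $M$. I will also use two elementary facts about membership in $\WW=\WW(G,d)$: it forces $\cork(M)\ge d$, i.e.\ $M$ has at least $d$ zero eigenvalues; and $\cork(M)\le d$ is equivalent to $M$ having at least $n-d$ nonzero eigenvalues.

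For ``$\WW'$ is relatively closed in $\WW$'', I would take a sequence $M_k\in\WW'$ with $M_k\to M\in\WW$ and show $M$ has exactly one negative eigenvalue. Lower semicontinuity of the number of negative eigenvalues gives at most one: if $\lambda_2(M)<0$ then $\lambda_2(M_k)<0$ for large $k$, contradicting $M_k\in\WW'$. For ``at least one'': $M$ is a well-signed $G$-matrix with $\cork(M)\ge d\ge 2$, so by Lemma~\ref{PROP:CONVHULL1}(a) the matrix $M$ is not positive semidefinite, hence has a negative eigenvalue. Thus $M\in\WW'$. This appeal to Lemma~\ref{PROP:CONVHULL1}(a) is the only non-formal ingredient, and it is the unique place where $d\ge 2$ is used; I expect this to be the ``main obstacle'' only in the sense that it is the step one must not overlook.

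For ``$\WW'\cap\WW_0$ is relatively open in $\WW$'', I would fix $M\in\WW'\cap\WW_0$, so that its eigenvalues are $\lambda_1(M)<0$, $\lambda_2(M)=\cdots=\lambda_{d+1}(M)=0$, and $\lambda_{d+2}(M),\ldots,\lambda_n(M)>0$, and then consider $M'\in\WW$ close to $M$. Continuity keeps $\lambda_1(M')<0$ and $\lambda_{d+2}(M'),\ldots,\lambda_n(M')>0$, so $M'$ has at least $n-d$ nonzero eigenvalues and hence $\cork(M')\le d$; combined with $\cork(M')\ge d$ (from $M'\in\WW$) this yields $\cork(M')=d$, i.e.\ $M'\in\WW_0$. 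Since the only eigenvalues of $M'$ that can still vanish are $\lambda_2(M'),\ldots,\lambda_{d+1}(M')$ and there must be exactly $d$ zero eigenvalues, all of these vanish; therefore $\lambda_1(M')$ is the unique negative eigenvalue of $M'$ and $M'\in\WW'\cap\WW_0$. Hence a full $\WW$-neighbourhood of $M$ lies in $\WW'\cap\WW_0$. The whole argument is short; the only point requiring care is the bookkeeping of which eigenvalues are pinned at $0$ by the corank constraint.
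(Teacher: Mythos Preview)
Your proof is correct and follows essentially the same approach as the paper's: both arguments reduce to the continuity of eigenvalues together with Lemma~\ref{PROP:CONVHULL1}(a) to force at least one negative eigenvalue when $d\ge 2$. The paper packages this as the characterizations $M\in\WW'\Leftrightarrow\lambda_2(M)\ge 0$ and $M\in\WW'\cap\WW_0\Leftrightarrow\lambda_{d+2}(M)>0$ (for $M\in\WW$), whereas you argue directly via sequences and neighbourhoods, but the substance is identical.
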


\begin{proof}
Let $\lambda_i(M)$ denote the $i$-th smallest eigenvalue of the
matrix $M$. We claim that for any $M\in\WW$,
\begin{equation}\label{9ok15a}
M\in\WW' ~\Leftrightarrow~\lambda_2(M)\geq 0.
\end{equation}
Indeed, if $M\in\WW'$, then trivially $\lambda_2(M)\geq 0$.
Conversely, if $\lambda_2(M)\geq 0$, then $M$ has at most one
negative eigenvalue. By Lemma \ref{PROP:CONVHULL1}(a), it has exactly
one, that is, $M\in\WW'$. This proves \eqref{9ok15a}. Since
$\lambda_2(M)$ is a continuous function of $M$, the first assertion
of the lemma follows.

We claim that if $d\ge 2$, for any $M\in\WW$,
\begin{equation}\label{9ok15ax}
M\in\WW'\cap\WW_0 ~\Leftrightarrow~\lambda_{d+2}(M)> 0.
\end{equation}
Indeed, if $M\in\WW'\cap\WW_0$, then $M$ has one negative eigenvalue
and exactly $d$ zero eigenvalues, and so $\lambda_{d+2}(M)>0$.
Conversely, assume that $\lambda_{d+2}(M)>0$. Since $M$ has at least
$d$ zero eigenvalues and at least one negative eigenvalue (by Lemma
\ref{PROP:CONVHULL1}(a)), we must have equality in both bounds, which
means that $M\in\WW'\cap\WW_0$. This proves \eqref{9ok15ax}.
Continuity of $\lambda_{d+2}(M)$ implies the second assertion.
\end{proof}

This last proposition implies that each nonempty connected subset of
$\WW_0$ is either contained in $\WW'$ or is disjoint from $\WW'$. We
formulate several consequences of this fact.

\begin{lemma}\label{27ok15e}
Suppose that $G$ is $2$-connected, and let $M$ be a well-signed
$G$-matrix with one negative eigenvalue and with corank
$d=\kappa(G)$. Let $u$ be the nullspace representation defined by
$M$, let $v\in\R^d$, and let $J:=\{i:~ u_i=v\}$. If $|J|\ge2$, then
the origin $0$ belongs to the convex hull of $u(V\setminus J)$.
\end{lemma}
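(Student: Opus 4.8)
\noindent\emph{Reduction.} We argue by contradiction. Since $M$ has a negative eigenvalue, Lemma~\ref{PROP:CONVHULL1}(b) says $0$ lies in the interior of the convex hull of $u(V)=u(V\setminus J)\cup\{v\}$. Let $w>0$ be the Perron eigenvector of the unique simple negative eigenvalue $\lambda$ of $M$; from $UM=0$ and $Mw=\lambda w$ we get $Uw=0$, i.e. $\sum_i w_iu_i=0$, hence $\sum_{i\in V\setminus J}w_iu_i=-\bigl(\sum_{i\in J}w_i\bigr)v$, so a nonpositive multiple of $v$ already lies in the convex hull of $u(V\setminus J)$. If $v=0$ we are done; and $V=J$ cannot occur (it would make $u$ constant, contradicting Lemma~\ref{PROP:CONVHULL1}(b)). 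So assume $v\ne0$, $V\setminus J\ne\emptyset$, and, for contradiction, that $0$ does not lie in the convex hull of $u(V\setminus J)$; by strict separation there is $c\in\R^d$ with $c\T u_i>0$ for all $i\notin J$. Set $x:=U\T c$. As the rows of $U$ form a basis of $\ker M$ (here $\rank U=\cork M=d$), $x\in\ker M$, and $x_i=c\T u_i$ is positive on $V\setminus J$ while $x_i=t:=c\T v$ on $J$; pairing the Perron identity with $c$ forces $t<0$. So it remains to contradict the existence of $x\in\ker M$ with $x|_{V\setminus J}>0$ and $x|_J\equiv t<0$. Note also $\ker M=\{U\T c:c\in\R^d\}\subseteq\{z:z|_J\text{ constant}\}$.

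\noindent\emph{Equilibrium equations at $J$.} Fix $q\in J$. Writing $\sum_jM_{qj}u_j=0$ with $u_q=v$ and $u_j=v$ for $j\in N(q)\cap J$ gives
\[
\Bigl(M_{qq}+\sum_{j\in N(q)\cap J}M_{qj}\Bigr)\,v\;=\;\sum_{j\in N(q)\setminus J}(-M_{qj})\,u_j .
\]
The right side is a nonnegative combination of points of $u(V\setminus J)$, with positive total weight exactly when $q$ has a neighbour outside $J$. If the coefficient on the left---the $q$-th row sum of the principal submatrix $M[J]$---is positive for some $q$ having a neighbour outside $J$, we obtain a positive multiple of $v$ in the convex hull of $u(V\setminus J)$, which together with the nonpositive multiple found above puts $0$ on a segment inside that hull: contradiction. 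If that coefficient is $0$ for such a $q$, the display puts $0$ in the hull directly; and if $q$ has no neighbour outside $J$, the coefficient must vanish since $v\ne0$. Hence we may assume that every row sum of $M[J]$ is $\le0$, and strictly negative precisely at the vertices of $J$ having a neighbour in $V\setminus J$; since $G$ is $2$-connected and $\emptyset\ne J\subsetneq V$ with $|J|\ge2$, at least two such vertices exist (else one of them would be a cut vertex).

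\noindent\emph{The remaining case.} Now $M[J]$ is a well-signed $G[J]$-matrix with $\mathbf 1_J\T M[J]\mathbf 1_J<0$, hence with a negative eigenvalue, hence---by Cauchy interlacing against $M$, which has exactly one---with exactly one negative eigenvalue. Splitting $G[J]$ into connected components and applying Lemma~\ref{PROP:CONVHULL1}(a) to each component whose block is positive semidefinite (such a block has corank $1$ with a positive kernel vector, so all its row sums vanish, so that component has no edge leaving it, so by connectivity of $G$ it would be all of $V$---absurd) shows $G[J]$ is connected; thus $M[J]=L+R$, with $L$ the positive semidefinite weighted Laplacian of $G[J]$ (corank $1$, kernel $\mathbf 1_J$) and $R\preceq0$ diagonal of rank $\ge2$. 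Using $\ker M\subseteq\{z:z|_J\text{ const}\}$ one checks $UL_0=0$ for every weighted Laplacian $L_0$ of $G[J]$, so $M+sL_0$ is a well-signed $G$-matrix with $U(M+sL_0)=0$; for $s\ge0$ it satisfies $M+sL_0\succeq M$, hence has at most one negative eigenvalue and $\cork(M+sL_0)\ge d$, and the instant this corank exceeds $d$ Lemma~\ref{PROP:CONVHULL1}(a) forces the number of negative eigenvalues to be exactly one---a good $G$-matrix of corank $>d$, contradicting $d=\kappa(G)$.

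\noindent\emph{The main obstacle.} It therefore remains to produce such a jump in corank. Whether $\cork(M+sL_0)$ exceeds $d$ for some $s>0$ is governed by the sign of the quadratic form $z\mapsto z\T M^{+}z$ on the $(|J|-1)$-dimensional space $T=\{z:\supp z\subseteq J,\ \mathbf 1_J\T z=0\}\subseteq\operatorname{Im}M$: a jump occurs exactly when this form is indefinite on $T$, equivalently when the negative direction of $M^{+}$---namely $w$---is not orthogonal to $T$. I expect the hard point to be the degenerate sub-case in which $w$ is constant on $J$ (so $w\perp T$ and the Laplacian perturbations inside $\MM_u$ by themselves do not change the corank); there one should pass to the contraction $G/J$, which is again $2$-connected, where $M^{*}=C\T MC$ (with $C$ identifying $J$ to one vertex) is a well-signed $G/J$-matrix with exactly one negative eigenvalue (Cauchy interlacing again), corank $\ge d$, and a strictly negative diagonal entry at the contracted vertex, and one must exploit $\rank R\ge2$ to force $\cork M^{*}>d$. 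Throughout, Lemma~\ref{SHIFT-NEW}---the relative openness of $\WW'\cap\WW_0$ in $\WW$---is what guarantees that we cannot avoid a corank jump by remaining forever in the stratum of good $G$-matrices of corank $d$.
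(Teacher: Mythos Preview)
Your argument is incomplete, and you say so yourself: the final paragraph opens with ``It therefore remains to produce such a jump in corank,'' identifies a degenerate sub-case (the Perron vector $w$ constant on $J$), and sketches a contraction idea without carrying it out. As stated, there is no proof that $\cork(M+sL_0)>d$ ever occurs; indeed $M+sL_0\succeq M$ with $\ker(M+sL_0)\supseteq\ker M$ is perfectly consistent with $\cork(M+sL_0)=d$ for all $s\ge0$, so the whole ``good $G$-matrix of corank $>d$'' line may never fire.

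The paper's proof shows you were one move away from finishing, but in the \emph{opposite} direction of your Laplacian perturbation. Rather than adding $sL_0$ to increase corank, the paper \emph{subtracts} the particular Laplacian that kills the off-diagonal $J$-entries: set
\[
M^\alpha:=M+\alpha\sum_{\substack{ij\in E\\ i,j\in J}}M_{ij}(e_i-e_j)(e_i-e_j)^{\sf T},
\]
so that $(M^\alpha)_{ij}=(1-\alpha)M_{ij}$ for distinct $i,j\in J$, while $(M^1)_{qq}=M_{qq}+\sum_{j\in N(q)\cap J}M_{qj}$ is exactly your row sum of $M[J]$ at $q$. Since $\ker M\subseteq\{z:z|_J\text{ const}\}$ (which you proved), each $(e_i-e_j)(e_i-e_j)^{\sf T}$ with $i,j\in J$ annihilates $\ker M$, so $\cork(M^\alpha)\ge d$; and $M^\alpha$ is well-signed for $\alpha\in[0,1)$. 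Now Lemma~\ref{SHIFT-NEW} together with $d=\kappa(G)$ forces $M^\alpha\in\WW'$ for all $\alpha\in[0,1)$, hence by continuity $M^1$ has at most one negative eigenvalue. But your own row-sum computation (the ``Equilibrium equations'' paragraph) shows that at two distinct vertices $1,2\in J$ with neighbours outside $J$ one has $(M^1)_{11}<0$, $(M^1)_{22}<0$, and now $(M^1)_{12}=0$: the $\{1,2\}$ principal submatrix of $M^1$ is negative definite, contradicting the eigenvalue bound. No corank jump, no contraction, no analysis of $M^+$ on $T$ is needed.
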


\begin{proof}
For $i\in V$, let $e_i$ be the $i$-th unit basis vector, and for
$i,j\in  V$, let $D^{ij}$ be the matrix $(e_i-e_j)(e_i-e_j)\T$.
Define
\[
M^{\alpha}:=M+\alpha\sum_{ij\in E\atop i,j\in J}M_{ij}D^{ij}
\qquad(\alpha\in[0,1]).
\]
The definition of $J$ implies that $\ker(M)\subseteq \ker(D^{ij})$
for all $i,j\in J$, and hence $\ker(M)\subseteq \ker(M^{\alpha})$ for
each $\alpha\in[0,1]$. So $\cork(M^{\alpha})\geq\cork(M)=\kappa(G)$
for each $\alpha\in[0,1]$. Moreover, $M^{\alpha}$ is a well-signed
$G$-matrix for each $\alpha\in[0,1)$. Since $M=M^0\in\WW'$, Lemma
\ref{SHIFT-NEW} implies that $M^{\alpha}\in\WW'$ for each
$\alpha\in[0,1)$. By the continuity of eigenvalues, $M^1$ has at most
one negative eigenvalue. Note that $M^1_{ij}=0$ for any two distinct
$i,j\in J$.

Assume that $0$ does not belong to the convex hull of $\{u_i:~
i\not\in J\}$. Then there exists $c\in\R^{\kappa(G)}$ such that
$u_i\T c<0$ for each $i\not\in J$. As $0$ belongs to interior of the
convex hull of $u(V)$ by Lemma \ref{PROP:CONVHULL1}(b), this implies
that $u_i\T c= v\T c >0$ for each $i\in J$.

As $|J|\geq 2$, the 2-connectivity of $G$ implies that $J$ contains
two distinct nodes, say nodes 1 and 2, that have neighbors outside
$J$. Since $\ker(M)\subseteq\ker(M^1)$, we have $\sum_j M^1_{1j}u_j
=0$, and hence
\[
M^1_{11}u_1\T c=-\sum_{j\neq 1}M^1_{1j}u_j\T c=
-\sum_{j\not\in J}M^1_{1j}u_j\T c.
\]
As $u_1\T c>0$ and $u_j\T c<0$ for $j\not\in J$, and as $M^1_{1j}\leq
0$ for all $j\not\in J$, and $M^1_{1j}<0$ for at least one $j\not\in
J$, this implies $M^1_{11}<0$. Similarly, $M^1_{22}<0$. As
$M^1_{12}=0$, the first two rows and columns of $M'$ induce a
negative definite $2\times 2$ submatrix of $M^1$. This contradicts
the fact that $M^1$ has at most one negative eigenvalue.
\end{proof}

For the next step we need a simple lemma from linear algebra.

\begin{lemma}\label{ADD1}
Let $A$ and $M$ be symmetric $n\times n$ matrices. Assume that $A$ is
$0$ outside a $k \times k$ principal submatrix, and let $M_0$ be the
complementary $(n-k)\times(n-k)$ principal submatrix of $M$. Let $a$
and $b$ denote the number of negative eigenvalues of $A$ and $M_0$,
respectively. Then for some $s>0$, the matrix $sM+A$ has at least
$a+b$ negative eigenvalues.
\end{lemma}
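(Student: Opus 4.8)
The plan is to produce, for a suitably small $s>0$, a linear subspace $L\subseteq\R^n$ of dimension $a+b$ on which the symmetric matrix $sM+A$ is negative definite; by the variational (Courant--Fischer) characterization of eigenvalues this forces $sM+A$ to have at least $a+b$ negative eigenvalues, which is exactly the claim.

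To set this up I would first permute rows and columns so that $A$ is zero outside its first $k$ rows and columns, and write everything in the corresponding block form
\[
A=\begin{pmatrix} A_1 & 0\\ 0 & 0\end{pmatrix},\qquad
M=\begin{pmatrix} M_1 & B\\ B\T & M_0\end{pmatrix},
\]
where $A_1$ is $k\times k$ with $a$ negative eigenvalues and $M_0$ is $(n-k)\times(n-k)$ with $b$ negative eigenvalues. Let $W_1\subseteq\R^k$ be spanned by the eigenvectors of $A_1$ for its negative eigenvalues and $W_0\subseteq\R^{n-k}$ by the eigenvectors of $M_0$ for its negative eigenvalues, so $\dim W_1=a$, $\dim W_0=b$, and there are constants $c,c'>0$ with $w\T A_1w\le-c\|w\|^2$ on $W_1$ and $(w')\T M_0w'\le-c'\|w'\|^2$ on $W_0$. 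Take $L:=W_1\times W_0\subseteq\R^k\times\R^{n-k}=\R^n$, of dimension $a+b$.

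The computation to carry out is then, for $x=(w,w')\in L$,
\[
x\T(sM+A)x=w\T A_1w+s\bigl(w\T M_1w+2\,w\T Bw'+(w')\T M_0w'\bigr).
\]
I would bound $w\T M_1 w\le\|M_1\|\,\|w\|^2$ and, by the arithmetic--geometric mean inequality, $2|w\T Bw'|\le\|B\|\bigl(\epsilon^{-1}\|w\|^2+\epsilon\|w'\|^2\bigr)$, getting
\[
x\T(sM+A)x\le\bigl(-c+s\|M_1\|+s\|B\|\epsilon^{-1}\bigr)\|w\|^2+\bigl(-sc'+s\|B\|\epsilon\bigr)\|w'\|^2 .
\]
Now fix $\epsilon:=c'/(2\|B\|)$ (any $\epsilon$ if $B=0$), so the second coefficient is $-sc'/2<0$; then fix $s>0$ small enough that $s(\|M_1\|+2\|B\|^2/c')<c/2$, so the first coefficient is below $-c/2<0$. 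Hence $x\T(sM+A)x<0$ for all nonzero $x\in L$, as desired. The degenerate cases $a=0$ or $b=0$ (drop the corresponding factor and constant) and $a=b=0$ (take $s=1$) are immediate.

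The one genuinely delicate point is the cross term $2s\,w\T Bw'$: it scales linearly in $s$ just like the beneficial term $s(w')\T M_0w'$, so shrinking $s$ alone cannot control it. That is why $\epsilon$ must be chosen first, in terms of $c'$ and $\|B\|$ and independently of $s$, to absorb the cross term into the $M_0$-contribution, after which $s$ can be taken small to pit the remaining first-order terms against the rigid gain $-c$ coming from $A_1$. (Equivalently, one could argue via first-order perturbation theory around $s=0$, using that the compression of $M$ to $\ker A$ has at least $b$ negative eigenvalues since it contains $M_0$ as a compression; but the subspace argument above avoids the technicalities of degenerate perturbation theory.)
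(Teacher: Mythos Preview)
Your proof is correct. The paper takes a different, somewhat slicker route: it observes that conjugating $sM+A$ by the diagonal matrix $\diag(I_k,\,s^{-1/2}I_{n-k})$ (which preserves inertia) yields
\[
\begin{pmatrix} sM_1+A_1 & \sqrt{s}\,B\\ \sqrt{s}\,B\T & M_0\end{pmatrix}
\longrightarrow
\begin{pmatrix} A_1 & 0\\ 0 & M_0\end{pmatrix}
\quad\text{as }s\to 0,
\]
and then invokes continuity of eigenvalues. In effect the paper hides the balancing that you carry out with the parameter $\epsilon$ inside this rescaling: after the congruence the cross term is $O(\sqrt{s})$ while the beneficial $M_0$-block is $O(1)$, so no separate $\epsilon$ is needed. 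Your variational argument is a bit longer but has the advantage of producing an explicit threshold for $s$ (in terms of $c,c',\|M_1\|,\|B\|$), whereas the paper's limit argument is nonconstructive on that point.
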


\begin{proof}
We may assume $A=\scriptsize\begin{pmatrix}A_0&0\\0&0\end{pmatrix}$
and $M=\scriptsize\begin{pmatrix}M_1&M_2\T\\M_2&M_0\end{pmatrix}$,
with $A_0$ and $M_1$ having order $k\times k$. By scaling the last
$n-k$ rows and columns of $sM+A$ by $1/\sqrt{s}$, we get the matrix
$\scriptsize\begin{pmatrix} sM_1+A_0&\sqrt{s}M_2\T\\
\sqrt{s}M_2&M_0\end{pmatrix}$. Letting $s\to 0$, this tends to
$B=\scriptsize\begin{pmatrix} A_0&0\\
0&M_0\end{pmatrix}$. Clearly, $B$ has $a+b$ negative eigenvalues,
and by the continuity of eigenvalues, the lemma follows.
\end{proof}

\begin{lemma}\label{27ok15b}
Let $M$ be a well-signed $G$-matrix with one negative eigenvalue and
with corank $d=\kappa(G)$, let $u$ be the nullspace representation
defined by $M$, and let $C$ be a clique in $G$ of size at most
$\kappa(G)$ such that the origin belongs to the convex hull of
$u(C)$. Then $G-C$ is disconnected.
\end{lemma}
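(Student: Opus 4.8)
The plan is to argue by contradiction: assuming that $G-C$ is connected, I will exhibit a connected family of well-signed $G$-matrices of corank at least $d$ running from a matrix with exactly one negative eigenvalue to one with at least two, which (together with the maximality of $\kappa$) is impossible by Lemma~\ref{SHIFT-NEW}. I assume $d=\kappa(G)\ge 2$, which is the relevant case; when $d=1$ one has $|C|=1$ and $G$ a path, and the statement is easy to check directly. Since $0$ lies in the convex hull of $u(C)$, I first fix $\lambda\in\R^V$ supported on $C$ with $\lambda\ge 0$, $\sum_i\lambda_i=1$ and $\sum_{i\in C}\lambda_iu_i=0$. The last identity reads $U\lambda=0$, so $\lambda$ is orthogonal to $\ker(M)$, the row space of $U$; in particular the vectors $u_i$ $(i\in C)$ are linearly dependent, so $\rank(U_C)\le|C|-1$ for the submatrix $U_C$ of $U$ on the columns of $C$. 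Because $|C|\le d=\cork(M)$, the subspace $\{w\in\ker(M):w|_C=0\}$ has dimension $d-\rank(U_C)\ge d-|C|+1\ge 1$; pick a nonzero $w$ in it. With $W:=V\setminus C$ and $M_0:=M[W]$, the block form of $M$ gives $M_0(w|_W)=0$ and $w|_W=w\ne 0$, so $M_0$ is singular; here $W\ne\emptyset$ since $|C|\le\cork(M)\le|V|-1$.

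The crucial step is to show that $M_0$ has a \emph{negative} eigenvalue. Suppose not, so $M_0\succeq 0$. Then $M_0$ is a positive semidefinite well-signed $(G-C)$-matrix of corank at least $1$, and $G-C$ is connected by assumption, so Lemma~\ref{PROP:CONVHULL1}(a), applied to $G-C$, forces $\cork(M_0)=1$ with a null vector whose entries are all nonzero and of the same sign. Rescaling $w$ by a suitable nonzero scalar, I may take $w|_W>0$; thus $w\ge 0$ and $w\ne 0$. On the other hand, since $G$ is connected and $M$ has a negative eigenvalue, Perron--Frobenius gives a strictly positive eigenvector $v$ belonging to the smallest (negative, simple) eigenvalue of $M$, and $w\in\ker(M)$ is orthogonal to $v$. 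But then $0=w\T v=\sum_{i\in W}w_iv_i>0$, a contradiction. Hence $M_0$ has a negative eigenvalue.

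Now I set $A:=-\lambda\lambda\T$. This matrix is negative semidefinite with exactly one negative eigenvalue, vanishes outside the $|C|\times|C|$ principal submatrix indexed by $C$, has off-diagonal entries $\le 0$, and satisfies $\ker(M)\subseteq\ker(A)$ (because $\lambda\perp\ker(M)$). Applying Lemma~\ref{ADD1} to $M$ and $A$, with $M_0=M[W]$ as the complementary block, produces an $s>0$ such that $sM+A$ has at least two negative eigenvalues. More generally, for $t>0$ put $M_t:=tM+A$. Each $M_t$ is a $G$-matrix (the only possibly nonzero entries of $A$ lie within the clique $C$), is well-signed since $(M_t)_{ij}=tM_{ij}+A_{ij}\le tM_{ij}<0$ on edges, and has $\cork(M_t)\ge\cork(M)=d$ since $\ker(M)\subseteq\ker(M_t)$; so $M_t\in\WW(G,d)$ for all $t>0$, and $\{M_t:t>0\}$ is connected. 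For $t$ large, $\tfrac1tM_t=M+\tfrac1tA$ is a small perturbation of $M$ preserving $\ker(M)$, hence has exactly one negative eigenvalue, and so does $M_t$; thus $M_t\in\WW'(G,d)$ for large $t$, whereas $M_s\notin\WW'(G,d)$. Finally, since $d=\kappa(G)$, every matrix of $\WW'(G,d)$ has corank exactly $d$, so $\WW'(G,d)=\WW'(G,d)\cap\WW_0(G,d)$ is, by Lemma~\ref{SHIFT-NEW}, both relatively closed and relatively open in $\WW(G,d)$. A connected set cannot meet a clopen set and its complement, so the family $\{M_t\}$ cannot exist, and $G-C$ is disconnected.

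The main obstacle, as indicated above, is the middle step: showing $M_0=M[W]$ has a negative eigenvalue. It is here that the three hypotheses combine — $|C|\le\kappa(G)$ gives the nonzero null vector $w$ of $M$ vanishing on $C$, $0\in\mathrm{conv}\,u(C)$ is what makes the $u_i$ $(i\in C)$ dependent so that $w$ exists, and the assumed connectivity of $G-C$ allows the use of Lemma~\ref{PROP:CONVHULL1}(a), after which positivity of the Perron eigenvector of $M$ closes the argument. The rest is the standard ``deform within $\WW$ and invoke Lemma~\ref{SHIFT-NEW}'' scheme, streamlined here by the remark that $\WW'(G,\kappa(G))$ is clopen in $\WW(G,\kappa(G))$.
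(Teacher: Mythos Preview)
Your proof is correct and uses the same ingredients as the paper's: the rank-one perturbation $A=-\lambda\lambda^{\sf T}$ supported on $C$, Lemma~\ref{ADD1}, the null vector of $M$ vanishing on $C$, the Perron--Frobenius argument on $M_0$, and the clopen nature of $\WW'$ in $\WW$ (which you make explicit via $\WW'=\WW'\cap\WW_0$ when $d=\kappa(G)$). The only difference from the paper is the logical order: the paper first shows $M+sA\in\WW'$ for all $s\ge 0$, deduces from Lemma~\ref{ADD1} that $M_0$ is positive semidefinite, and then, assuming $G-C$ connected, derives a contradiction from the null vector; you instead assume $G-C$ connected, show directly that $M_0$ must have a negative eigenvalue, and then invoke Lemma~\ref{ADD1} and the clopen property to reach the contradiction. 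This is just the contrapositive of the same argument, not a genuinely different route.
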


\begin{proof}
We can write $0=\sum_ia_iu_i$ with $a_i\geq 0$, $\sum_ia_i=1$, and
$a_i=0$ if $i\not\in C$. Let $A$ be the matrix $-aa\T$. Since $a$ is
nonzero, $A$ has a negative eigenvalue.

Since $\sum_ia_iu_i=0$, we have $\ker(M)\subseteq\ker(M+sA)$ for each
$s$. This implies that $\cork(M+sA)\geq\cork(M)$ for each $s$.
Moreover, $M+sA$ is a well-signed $G$-matrix for $s\geq 0$.
Hence, as $M\in\WW'$, we know by Lemma \ref{SHIFT-NEW} that
$M+sA\in\WW'$ for every $s\geq 0$. In other words, $M+sA$ has one
negative eigenvalue for every $s\geq 0$.

Let $M_0$ be the matrix obtained from $M$ by deleting the rows and
columns with index in $C$. Note that $M_0$ has no negative
eigenvalue: otherwise by Lemma \ref{ADD1}, $M+sA$ has at least two
negative eigenvalues for some $s>0$, a contradiction.

Now suppose that $G-C$ is connected. As $u(C)$ is linearly dependent
and $|C|\le\cork(M)$, $\ker(M)$ contains a nonzero vector $x$ with
$x_i=0$ for all $i\in C$. Then by the Perron--Frobenius theorem,
$\cork(M_0)=1$ and $\ker(M_0)$ is spanned by a positive vector $y$.
As $G$ is connected, $x$ is orthogonal to the positive eigenvector
belonging to the negative eigenvalue of $M$. So $x$ has both positive
and negative entries. On the other hand, $x|_{V\setminus
C}\in\ker(M_0)$, and so $x|_{V\setminus C}$ must be a multiple of
$y$, a contradiction.
\end{proof}

Taking $C$ a singleton, we derive:

\begin{corollary}\label{1NODE}
Let $G$ be a $2$-connected graph, let $M\in\WW'$ have corank
$\kappa(G)$, and let $u$ be the nullspace representation defined by
$M$. Then $u_i\neq 0$ for all $i$. Equivalently, the nullspace
representation defined by $M$ can be normalized by node scaling.
\end{corollary}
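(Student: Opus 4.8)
The plan is to derive Corollary \ref{1NODE} directly from Lemma \ref{27ok15b} by taking the clique $C$ to be a single node. Fix a node $i\in V$ and suppose, for contradiction, that $u_i=0$. A single node is trivially a clique of size $1\le 2\le\kappa(G)$ (since $G$ is $2$-connected, $\kappa(G)\ge 2$ by the facts quoted in the introduction, though in fact size $1\le\kappa(G)$ already suffices as $\kappa(G)\ge1$ always holds for a connected graph). The convex hull of $u(C)=\{u_i\}=\{0\}$ is $\{0\}$, which obviously contains the origin. So the hypotheses of Lemma \ref{27ok15b} are met with $C=\{i\}$, and we conclude that $G-i$ is disconnected. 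But this contradicts the $2$-connectivity of $G$, which says precisely that $G-i$ is connected for every node $i$. Hence $u_i\neq0$ for all $i$.

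For the ``equivalently'' clause, I would recall the node-scaling transformation described in Section 2.1: replacing $U$ by $UD$ and $M$ by $D^{-1}MD^{-1}$ for a nonsingular diagonal matrix $D$ preserves the property of being a well-signed $G$-matrix with $U(D^{-1}MD^{-1})D$-style nullspace relation, and allows one to rescale each column $u_i$ to unit length --- but only if each $u_i$ is already nonzero, since a zero vector cannot be scaled to have unit length. Thus the condition $u_i\neq0$ for all $i$ is exactly what is needed (and clearly sufficient) for the representation to admit a normalization by node scaling. I would state this as a one-line observation: pick $D=\diag(1/\|u_i\|)_{i\in V}$, which is well-defined and nonsingular precisely because no $u_i$ vanishes.

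I do not expect any real obstacle here; the corollary is a routine specialization. The only point requiring a moment's care is checking that the size constraint $|C|\le\kappa(G)$ in Lemma \ref{27ok15b} is satisfied, which is immediate for a singleton, and that ``the origin belongs to the convex hull of $u(C)$'' holds --- this is where the assumption $u_i=0$ is used, and it is the content of the proof by contradiction. One should also make sure Lemma \ref{27ok15b} is being applied with $d=\kappa(G)$, matching the hypothesis on $M$ stated in the corollary. All of these are satisfied verbatim, so the argument is genuinely just ``take $C$ a singleton,'' as the text preceding the corollary already signals.
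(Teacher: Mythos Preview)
Your proof is correct and matches the paper's own argument exactly: the paper derives the corollary in one line, ``Taking $C$ a singleton, we derive,'' and you have simply spelled out that specialization of Lemma~\ref{27ok15b} together with the routine remark on normalization.
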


\section{1-dimensional nullspace representations}\label{SEC:1DIM}

As a warmup, let us settle the case $d=1$. For every connected
graph $G=(V,E)$, it is easy to construct a singular $G$-matrix with
exactly one negative eigenvalue: start with any $G$-matrix, and
subtract an appropriate constant from the main diagonal. Our goal is
to show that unless the graph is a path and the nullspace
representation is a monotone embedding in the line, we can modify the
matrix to get a $G$-matrix with one negative eigenvalue and with
corank at least $2$.

\subsection{Nullspace and neighborhoods}

We start with noticing that given vector $u\in\R^V$, it is easy to
describe the matrices in $\WW_u$. Indeed, consider any matrix
$M\in\MM_u$. Then for every node $i$ with $u_i=0$, we have
\begin{equation}\label{EQ:MSUM}
\sum_{j\in N(i)} M_{ij} u_j = \sum_j M_{ij}u_j= 0.
\end{equation}
Furthermore, for every node $i$ with $u_i\not=0$, we have
\begin{equation}\label{EQ:SUPPU}
M_{ii}= -\frac1{u_i}\sum_{j\in N(i)} M_{ij} u_j.
\end{equation}
Conversely, if we specify the off-diagonal entries of a $G$-matrix
$M$ so that \eqref{EQ:MSUM} is satisfied, then we can define $M_{ii}$
for nodes $i\in \supp(u)$ according to \eqref{EQ:SUPPU}, and for
nodes $i$ with with $u_i=0$ arbitrarily, we get a matrix in $\MM_u$.

As an application of this construction, we prove the following lemma.

\begin{lemma}\label{LEM:MU}
Let $u\in \R^V$. Then $\WW_u\not=\emptyset$ if and only if for every
node $i$ with $u_i=0$, either all its neighbors satisfy $u_j=0$, or
it has neighbors both with $u_j<0$ and $u_j>0$.
\end{lemma}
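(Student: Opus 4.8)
The statement is an "if and only if" characterization of when a vector $u\in\R^V$ admits a well-signed $G$-matrix $M$ with $UM=0$. The construction recipe just before the lemma already does most of the work: the only constraint to satisfy is \eqref{EQ:MSUM} at nodes in the zero set $Z:=\{i:u_i=0\}$, using strictly negative off-diagonal entries on edges and zeros elsewhere, after which \eqref{EQ:SUPPU} fills in the diagonal on $\supp(u)$ freely. So the proof splits into necessity and sufficiency of the stated local condition at each $i\in Z$.

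For necessity: suppose $M\in\WW_u$ and $i\in Z$ has a neighbor with $u_j\neq 0$. Equation \eqref{EQ:MSUM} reads $\sum_{j\in N(i)}M_{ij}u_j=0$; since every $M_{ij}<0$ for $j\in N(i)$ (well-signedness) and not all $u_j$ in this sum vanish, the nonzero terms $M_{ij}u_j$ cannot all have the same sign, so there must be neighbors with $u_j>0$ and with $u_j<0$. That handles one direction cleanly.

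For sufficiency: assume the local condition holds at every $i\in Z$. I would define the off-diagonal entries edge by edge. For an edge $ij$ with both endpoints outside $Z$, or both inside $Z$, set $M_{ij}$ to be any negative number (say $-1$); these edges contribute nothing to the constraint \eqref{EQ:MSUM} at nodes of $Z$ since if $i,j\in Z$ the term $M_{ij}u_j$ is $0$. The real task is the edges from a node $i\in Z$ to $\supp(u)$. Fix such an $i$; by hypothesis its neighborhood in $\supp(u)$ contains a node $p$ with $u_p>0$ and a node $q$ with $u_q<0$. Assign negative weights to the edges $ip$ and $iq$ and to all other edges $ij$ with $j\in\supp(u)$ so that $\sum_{j\in N(i)\cap\supp(u)}M_{ij}u_j=0$: concretely, give every such edge an arbitrary negative coefficient, compute the resulting (signed) sum $S$ of $M_{ij}u_j$ over those edges, and then correct by adding a suitable positive amount to $|M_{ip}|$ if $S>0$ or to $|M_{iq}|$ if $S<0$ — this keeps all coefficients negative while driving the sum to zero. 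One subtlety is consistency: an edge $ij$ may be adjusted from the viewpoint of both endpoints if both $i,j\in Z$, but in that case the term contributes $0$ to each constraint, so no conflict arises; edges with exactly one endpoint in $Z$ are "owned" by that endpoint alone, so the per-node adjustments are independent. After the off-diagonal entries are fixed, apply \eqref{EQ:SUPPU} to define $M_{ii}$ for $i\in\supp(u)$ and pick any values (e.g. $0$) for $M_{ii}$ with $i\in Z$; the recipe guarantees $M\in\MM_u$, and by construction $M$ is well-signed, so $M\in\WW_u$.

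The only place requiring care — the "main obstacle," though it is mild — is verifying that the independent per-node corrections on the edges out of $Z$ are globally consistent, i.e. that no edge is asked to take two incompatible weights. As noted, this is resolved by observing that an edge lies in at most one node's "correction budget" unless both its endpoints are in $Z$, in which case that edge is irrelevant to every constraint and can be set to $-1$. Everything else is the bookkeeping already licensed by the construction preceding the lemma.
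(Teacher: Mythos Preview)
Your proposal is correct and follows essentially the same approach as the paper: reduce to choosing negative off-diagonal entries satisfying \eqref{EQ:MSUM} at each node of $Z$, observe that edges within $Z$ are irrelevant so the constraints decouple node by node, and then note that a single equation $\sum_{j\in N(i)}M_{ij}u_j=0$ admits an all-negative solution precisely when the $u_j$'s occurring with nonzero coefficient take both signs. The paper compresses all of this into three sentences; your version simply spells out the necessity direction and gives an explicit correction scheme for sufficiency, together with the edge-ownership argument that justifies treating the constraints independently.
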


\begin{proof}
By the remark above, it suffices to specify negative numbers $M_{ij}$
for the edges $ij$ so that \eqref{EQ:MSUM} is satisfied for each $i$
with $u_i=0$. The edges between two nodes with $u_i=0$ play no role,
and so the conditions \eqref{EQ:MSUM} can be considered separately.
For a fixed $i$, the single linear equation for the $M_{ij}$ can be
satisfied by negative numbers if and only if the condition in the
lemma holds.
\end{proof}

We need the following fact about the neighbors of the other nodes.

\begin{lemma}\label{LEM:ORDER}
Let $u\in\R^V$, $M\in\WW_u$, and suppose that $M$ has a negative
eigenvalue $\lambda<0$, with eigenvector $\pi>0$. Then every node $i$
with $u_i>0$ has a neighbor $j$ for which $u_j/\pi_j<u_i/\pi_i$.
\end{lemma}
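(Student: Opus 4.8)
The plan is to argue by contradiction: suppose some node $i$ with $u_i>0$ has \emph{every} neighbor $j$ satisfying $u_j/\pi_j\ge u_i/\pi_i$. I want to extract from this a contradiction with the equation $\sum_j M_{ij}u_j = 0$ that holds for $i\in\supp(u)$, using the sign pattern of $M$ (off-diagonal entries negative on edges) together with the eigenvalue equation $M\pi = \lambda\pi$ with $\lambda<0$, $\pi>0$.

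First I would write out the two facts attached to node $i$. Since $u_i>0$, equation \eqref{EQ:SUPPU} gives $M_{ii}u_i = -\sum_{j\in N(i)} M_{ij}u_j$, i.e. $\sum_{j\in N(i)\cup\{i\}} M_{ij}u_j = 0$ where I write $M_{ii}$ for the diagonal term; more cleanly, $\sum_j M_{ij}u_j = 0$ (the full row, since $M\in\MM_u$ means $UM=0$ and here $U$ is the single row $u$). Second, the eigenvector equation gives $\sum_j M_{ij}\pi_j = \lambda\pi_i$. The idea is to combine these: multiply the first by $1/\pi_i$ and subtract a suitable multiple, or rather consider $\sum_j M_{ij}\bigl(u_j - (u_i/\pi_i)\pi_j\bigr)$. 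By the first identity this equals $0 - (u_i/\pi_i)\sum_j M_{ij}\pi_j = -(u_i/\pi_i)\lambda\pi_i = -\lambda u_i$, which is strictly positive since $\lambda<0$ and $u_i>0$.

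On the other hand, look term by term at $\sum_j M_{ij}\bigl(u_j - (u_i/\pi_i)\pi_j\bigr)$. The $j=i$ term vanishes. For $j\in N(i)$, $M_{ij}<0$, and the contradiction hypothesis says $u_j/\pi_j \ge u_i/\pi_i$, i.e. $u_j - (u_i/\pi_i)\pi_j \ge 0$ (using $\pi_j>0$); for $j\notin N(i)\cup\{i\}$, $M_{ij}=0$. Hence every term of the sum is $\le 0$, so the whole sum is $\le 0$, contradicting that it equals $-\lambda u_i>0$. This forces the existence of a neighbor $j$ with $u_j/\pi_j < u_i/\pi_i$, as claimed.

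The only point needing a little care — and the main (very mild) obstacle — is making sure the row-sum identity $\sum_j M_{ij}u_j=0$ is legitimately available: this is exactly the statement that $u$ is in the left nullspace, which is the hypothesis $M\in\WW_u\subseteq\MM_u$, so $uM=0$ and in particular the $i$-th coordinate of $uM$ is $\sum_j u_j M_{ji}=\sum_j M_{ij}u_j=0$ by symmetry of $M$. Everything else is a sign count. I would also remark that the same computation, run with the inequality reversed, would handle nodes with $u_i<0$ symmetrically, though the lemma as stated only asks for the $u_i>0$ case.
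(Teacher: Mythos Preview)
Your proof is correct and follows essentially the same approach as the paper: both argue by contradiction, combining the nullspace identity $\sum_j M_{ij}u_j=0$ with the eigenvalue equation $\sum_j M_{ij}\pi_j=\lambda\pi_i$ and the sign pattern of $M$ to derive $0\le\lambda u_i<0$. The paper writes this as a single inequality chain, while you organize it as computing $\sum_j M_{ij}\bigl(u_j-(u_i/\pi_i)\pi_j\bigr)$ in two ways, but the content is identical.
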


\begin{proof}
Suppose not. Then $u_j\ge \pi_j u_i/\pi_i$ for every $j\in N(i)$, and
so
\[
0=\sum_j M_{ij}u_j \le  M_{ii} u_i +
\sum_{j\in N(i)} M_{ij}\frac{\pi_j}{\pi_i}u_i
= \frac{u_i}{\pi_i}\Bigl(\sum_j M_{ij}\pi_j\Bigr)= \lambda u_i <0,
\]
a contradiction.
\end{proof}

\subsection{Auxiliary algorithms}\label{SEC:ALG-AUX}

No we turn to the algorithmic part, starting with some auxiliary
algorithms.

\begin{alg}[Interpolation]\label{ALG:INTER} ~

{\it Input:} a vector $u\in\R^V$ and two matrices $M\in\WW'_u$ and
$M'\in\WW_u\setminus\WW'_u$.

{\it Output:} a matrix $M''\in\WW'_u$ with corank at least $2$.
\end{alg}

Consider the well-signed $G$-matrices $M^t=tM'+(1-t)M$ $(0\le t\le
1)$. As $\WW'\cap\WW_0$ is open and closed in $\WW_0$, there must be
points $t\in[0,1]$ where $\cork(M^t)>1$. We can find these values $t$
by considering any nonsingular $(n-1)\times(n-1)$ submatrix of $M$,
and the corresponding submatrix $B^t$ of $M^t$. Then every value of
$t$ with $\cork(M^t)>1$ is a root of the algebraic equation
$\det(B^t)=0$, so only these have to be inspected. The first such
point will give a matrix $M^t\in\WW'_u$ with $\cork(M^t)>1$.

\begin{alg}[Double node]\label{ALG:2NODE} ~

{\it Input:} a vector $u\in\R^V$, two nodes $i$ and $j$ with
$u_i=u_j=0$, and a matrix $M\in\WW_u$.

{\it Output:} a matrix $M'\in\WW_u$ with at least two negative
eigenvalues.
\end{alg}

\smallskip

Subtract $t>0$ from both diagonal entries $M_{ii}$ and $M_{jj}$, to
get a matrix $M'$. Trivially $M'\in\WW_u$. Furthermore, if
$t>2\max\{|M_{ii}|,|M_{jj}|,|M_{ij}|\}$, then the submatrix of $M'$
formed by rows and columns $i$ and $j$ has negative trace and
positive determinant, and so it has two negative eigenvalues. This
implies that $M'$ has at least two negative eigenvalues.

\begin{alg}[Double cover]\label{ALG:2EDGE} ~

{\it Input:} a vector $u\in\R^V$, two edges $ab$ and $cd$ with
$u_a,u_c<0$ and $u_b,u_d>0$, and a matrix $M\in\WW_u$.

{\it Output:} a matrix $M'\in\WW_u$ with at least two negative
eigenvalues.
\end{alg}

Assume that $b\not=d$ (the case when $a\not=c$ can be treated
similarly). Define the symmetric matrix $N^{ab}\in\R^{V\times V}$ by
\[
(N^{ab})_{ij}=
  \begin{cases}
    u_b/u_a, & \text{if $\{i,j\}=\{a,b\}$},\\
    -u_b^2/u_a^2, & \text{if $i=j=a$}, \\
    -1, & \text{if $i=j=b$}, \\
    0, & \text{otherwise},
  \end{cases}
\]
and define $N^{cd}$ analogously. Then $N^{ab}u=N^{cd}u=0$, and so
$M'=M+tN^{ab}+tN^{cd}\in\WW_u$ for every $t>0$. Furthermore, if
$t>2\max\{|M_{bb}|,|M_{dd}|,|M_{bd}|\}$, then $M'$ has at least two
negative eigenvalues by the same argument as in Algorithm
\ref{ALG:2NODE}.

\subsection{Embedding in the line}\label{SEC:ALG1}

Now we come to the main algorithm for dimension 1.

\begin{alg}\label{ALG:MAIN} ~

{\it Input:} A connected graph $G=(V,E)$.

{\it Output:} Either an embedding $u:~V\to\R$ of $G$ (then $G$ is a
path), or a well-signed $G$-matrix with one negative eigenvalue and
corank at least $2$.
\end{alg}

{\bf Preparation.} We find a matrix $M\in\WW'(G)$. This is easy by
creating any well-signed $G$-matrix and subtracting its second
smallest eigenvalue from the diagonal. We may assume that
$\cork(M)=1$, else we are done.

Let $u\not=0$ be a vector in the nullspace of $M$, and let $\pi$ be
an eigenvector belonging to its negative eigenvalue. We apply
node-scaling, and get that the matrix $M'=\diag(\pi)M\diag(\pi)$ is
in $\WW'(G)$ and the vector $w=(u_i/\pi_i:~i\in V)$ is in its
nullspace. By Lemma \ref{LEM:ORDER}, this means that if we replace
$M$ by $M'$ and $u$ by $w$, then we get a vector $u\in\R^n$ and a
matrix $M\in\WW'_u$ such that every node $i$ with $u_i>0$ has a
neighbor $j$ with $u_j<u_i$, and every node $i$ with $u_i<0$ has a
neighbor $j$ with $u_j>u_i$.

Let us define a {\it cell} as an open interval between two
consecutive points $u_i$. If every cell is covered by only one edge,
then $G$ is a path and $u$ defines an embedding of $G$ in the line,
and we are done. Else, let us find a cell $(a,b)$ covered by at least
two edges that is nearest the origin. Replacing $u$ by $-u$ if
necessary, we may assume that $b>0$.

\medskip

{\bf Main step.} Below, we are going to maintain the following
conditions. We have a vector $u\in\R^V$ and a matrix $M\in\WW'_u$;
every node $i$ with $u_i>0$ has a neighbor $j$ with $u_j<u_i$; there
is a cell $(a,b)$ with $b>0$ that is doubly covered, and that is
nearest the origin among such cells.

We have to distinguish some cases.

{\bf Case 1.} If $a<0$, then we use the Double Cover Algorithm
\ref{ALG:2EDGE} to find a matrix $M'\in\WW_u$ with two negative
eigenvalues, and the Interpolation Algorithm \ref{ALG:INTER} returns
a matrix with the desired properties.

\medskip

{\bf Case 2.} If $a\ge 0$, then let $u_p$ be the smallest nonnegative
entry of $u$.

\smallskip

{\bf Case 2.1.} Assume that $u_p=0$. If there is a node $j\not=p$
with $u_j=0$, then run the Double Node algorithm \ref{ALG:2NODE} to
get a matrix in $\WW_u$ with at least two negative eigenvalues, and
we can finish by the Interpolation Algorithm \ref{ALG:INTER} again.
So we may assume that $u_j\not=0$ for $j\not=p$.

Let $(0,c)$ be the cell incident with $0$ $(c>0)$, and let $M'$ be
obtained from $M$ by replacing the $(p,p)$ diagonal entry by $0$,
then $M'\in\WW_u$. It follows by Lemma \ref{PROP:CONVHULL1} that $M'$
is not positive semidefinite. If $M'$ has more than one negative
eigenvalue, then we can run the Interpolation Algorithm
\ref{ALG:INTER}. So we may assume that $M'\in\WW'_u$.

For $t\in(0,c)$, consider the $G$-matrices $A^t$ defined for edges
$ij$ by
\[
A^t_{ij} = A^t_{ji} =
  \begin{cases}
    M_{ij},& \text{if $i,j\not=p$,}\\[6pt]
    \displaystyle\frac{u_j}{u_j-t}M_{pj}, & \text{if $i=p$,}\\
  \end{cases}
\]
and on the diagonal by
\[
A^t_{ii} = -\frac1{u_i-t}\sum_{j\in N(i)} A^t_{ij} (u_j-t).
\]
Clearly, $A^t$ is a well-signed $G$-matrix and $A^t(u-t)=0$. This
means that $A^t\in\WW_{u-t}$. Lemma \ref{PROP:CONVHULL1} implies that
$A^t$ has at least one negative eigenvalue. Furthermore, if $t\to 0$,
then $A^t_{ij}\to M_{ij}$; this is trivial except for $i=j=p$, when,
using that $\sum_{j\in N(p)} M_{pj} u_j = - M_{pp}u_p =0$, we have
\[
A^t_{pp} = \frac1{t}\sum_{j\in N(p)} M_{pj} u_j =0.
\]
Thus $A^t\to M'$ as $t\to 0$.

If the matrix $A^{c/2}$ has one negative eigenvalue, then replace $M$
by $A^{c/2}$ and $u$ by $u-c/2$, and return to the Main Step. Note
that the number of nodes with $u_i\ge 0$ has decreased, while those
with $u_i>0$ did not change.

If it has more than one, then consider the points $t\in(0,c/2]$ where
$\cork(A^t)>1$ (such a value of $t$ exists by Lemma \ref{SHIFT-NEW}).
These values of $t$ can be found like in the Interpolation Algorithm
\ref{ALG:INTER}. The smallest such value of $t$ gives $A^t\in\WW'(G)$
and $\cork(A^t)>1$, and we are done.

\smallskip

{\bf Case 2.2.} Assume that $u_p>0$. Let $\sigma$ and $\tau$ denote
the cells to the left and to the right of $u_p$ (so $0\in \sigma$).
There is no other node $q$ with $u_q=u_p$ (since from both nodes, an
edge would start to the left, whereas $0$ is covered only once). From
$u_p$, there is an edge starting to the left, and also one to the
right (since by connectivity, there is an edge covering $\tau$, and
this must start at $p$, since $\sigma$ is covered only once).
Therefore, $\WW_{u-u_p}\not=\emptyset$ by Lemma \ref{LEM:MU}.
Following the proof of this Lemma, we can construct a matrix
$B\in\WW_{u-u_p}$.

For $t\in[0,u_p)$, consider the $G$-matrices $B^t$ defined for edges
$ij$ by
\[
B^t_{ij} = B^t_{ji} =
  \begin{cases}
    B_{ij},& \text{if $i,j\not=p$,}\\
    \displaystyle\frac{u_j-u_p}{u_j-t}B_{pj}, & \text{if $i=p$,}\\
  \end{cases}
\]
and on the diagonal by
\[
B^t_{ii} = -\frac1{u_i-t}\sum_{j\in N(i)} B^t_{ij} (u_j-t).
\]
Clearly, $B^t$ is a well-signed $G$-matrix and $B^t(u-t)=0$.
Furthermore, $B^t\to B$ if $t\to u_p$.

If $B$ has one negative eigenvalue, then replace $M$ by $B$ and $u$
by $u-u_p$, and go to the Main Step. Note that the number of nodes
with $u_i>0$ has decreased, while those with $u_i\ge0$ did not
change.

If $B^0$ has more than one negative eigenvalue, then we call the
Interpolation Algorithm \ref{ALG:INTER}, to get a matrix in $\WW'_u$
with corank at least $2$. Finally, if $B$ has more than one negative
eigenvalue and $B^0$ has only one, then there must be values of $t$
such that $\cork(B^t)>1$. We can find these values just as in the
Interpolation Algorithm \ref{ALG:INTER}. For the smallest such value
of $t$ we have $B^t\in\WW'(G)$ and $\cork(B)>1$, and we are done.

\section{2-dimensional nullspace representations}\label{27ok15d}

\subsection{$G$-matrices and circulations}

Our goal in this section is to provide a characterization of
$G$-matrices and their nullspace representations in dimension $2$.

A {\it circulation} on an undirected simple graph $G$ is a real
function $f:~V\times V$ such that is supported on adjacent pairs, is
skew symmetric and satisfies the flow conditions:
\begin{equation*}
f(i,j)=0\ (ij\notin E),\quad f(i,j)=-f(j,i)\ (ij\in E),
\quad \sum_j f(i,j)=0\ (i\in V).
\end{equation*}
If we fix an orientation of the graph, then it suffices to specify
the values of $f$ on the oriented edges; the values on the reversed
edges follow by skew symmetry. A {\em positive circulation} on an
oriented graph $(V,A)$ is a circulation on the underlying undirected
graph that takes positive values on  the arcs in $A$.

For any representation $u:~V\to\R^k$, we define its {\it area-matrix}
as the (skew-symmetric) matrix $T=T(u)$ by $T_{ij}:=\det(u_i,u_j)$.
This number is the signed area of the parallelogram spanned by $u_i$
and $u_j$, and it can also be described as $T_{ij}=u_i\T u_j'$, where
$u'_j$ is the vector obtained by rotating $u_j$ counterclockwise over
$90^{\circ}$.

Given a graph $G$ and a representation $u:~V\to\R^2$ by nonzero
vectors, we define a directed graph $(V,A_u)$ and an undirected graph
$(V,E_u)$ by
\begin{align*}
A_u:&=\{(i,j)\in V\times V\mid ij\in E, T(u)_{ij}>0\}\\
E_u:&=\{ij\in E\mid T(u)_{ij}=0\}.
\end{align*}
So $E$ is partitioned into $A_u$ and $E_u$, where $(V,A_u)$ is an
oriented graph in which each edge is oriented counterclockwise as
seen from the origin. The graph $(V,E_u)$ consists of edges that are
contained in a line through the origin.

Given a representation $u:~V\to\R^2$, a circulation $f$ on $(V,A_u)$
and a function $g:~E_u\to\R$, we define a $G$-matrix $M(u,f,g)$ by
\[
M(u,f,g)_{ij} =
  \begin{cases}
    -f_{ij}/T_{ij}, & \text{if $ij\in A_u$}, \\
    g(ij), & \text{if $ij\in E_u$}.
  \end{cases}
\]
We define the diagonal entries by \eqref{8ok15d}, and let the other
entries be $0$.

\begin{lemma}\label{LEM:DECOMP2}
Let $G=(V,E)$ be a graph, let $u:~V\to\R^2$ be a representation of
$V$ by nonzero vectors. Then
\[
\MM_u = \bigl\{M(u,f,g):~\text{\rm $f$ is a circulation on $(V,A_u)$ and
$g:~E_u\to\R$}\bigr\}.
\]
\end{lemma}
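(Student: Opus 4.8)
The plan is to show the two inclusions separately. The inclusion "$\supseteq$" is essentially a verification: given a circulation $f$ on $(V,A_u)$ and an arbitrary $g:E_u\to\R$, I need to check that $M(u,f,g)$ indeed lies in $\MM_u$, i.e.\ that $M(u,f,g)u^{\sf T}=0$ (reading $u$ as the $2\times n$ matrix whose columns are the $u_i$). By the remark around \eqref{EQ:PAR}--\eqref{8ok15d}, since the diagonal entries are defined by \eqref{8ok15d}, it suffices to check the parallelism condition \eqref{EQ:PAR}, namely $\sum_{j\in N(i)} M_{ij} u_j \parallel u_i$ for every $i$. Equivalently, using the area-matrix, this says $\sum_{j} M_{ij}\,\det(u_i,u_j)=0$, i.e.\ $\sum_j M_{ij} T_{ij}=0$. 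Now split the sum over $N(i)$ into the part in $A_u\cup A_u^{-1}$ and the part in $E_u$: on $E_u$ we have $T_{ij}=0$, so those terms vanish regardless of $g$; on the oriented edges $M_{ij}T_{ij} = (-f_{ij}/T_{ij})\cdot T_{ij} = -f_{ij}$ (with the sign conventions for reversed arcs handled by skew-symmetry of both $f$ and $T$), so $\sum_j M_{ij}T_{ij} = -\sum_j f(i,j) = 0$ by the flow condition at $i$. Hence $M(u,f,g)\in\MM_u$.

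For the reverse inclusion "$\subseteq$", take any $M\in\MM_u$. I define $g$ to be the restriction of the off-diagonal entries of $M$ to $E_u$, and I define $f$ on the arcs of $A_u$ by $f(i,j) := -M_{ij}T_{ij}$ (extended skew-symmetrically). Then by construction $M(u,f,g)$ agrees with $M$ on all off-diagonal entries, hence (since both have diagonal determined by \eqref{8ok15d} from those off-diagonal entries, and both kill $u$) $M = M(u,f,g)$. The only thing to verify is that $f$ is genuinely a circulation on $(V,A_u)$: it is supported on adjacent pairs and skew-symmetric by definition, so the content is the flow condition $\sum_j f(i,j)=0$. But this is exactly the computation run in reverse: $\sum_j f(i,j) = -\sum_{j\in N(i)} M_{ij}T_{ij}$, and since $UM=0$ we have $\sum_j M_{ij}u_j \parallel u_i$, which after taking the determinant against $u_i$ gives $\sum_j M_{ij}\det(u_i,u_j)=\sum_j M_{ij}T_{ij}=0$. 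So $f$ is a circulation, and $M\in\{M(u,f,g)\}$.

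The proof is therefore a pair of short verifications glued together by the normalization fact \eqref{8ok15d}. The one place to be careful — and the main (minor) obstacle — is the bookkeeping of signs and orientations: $A_u$ contains each edge of $E\setminus E_u$ in exactly one of its two orientations, and one must check that the formula $M_{ij}=-f_{ij}/T_{ij}$ produces a symmetric $M$ (it does, because both $f$ and $T$ are skew-symmetric, so their ratio is symmetric) and that the flow condition "$\sum_j f(i,j)=0$" correctly corresponds to summing over all neighbours in both the $A_u$ and $A_u^{-1}$ directions together with the $E_u$ neighbours contributing zero. Once the convention that $T_{ij}\neq 0$ precisely for $ij\notin E_u$ is used, there is no further difficulty; in particular no positivity of $f$ is claimed here, so the circulation $f$ need not be a positive circulation on $(V,A_u)$.
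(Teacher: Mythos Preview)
Your proposal is correct and follows essentially the same approach as the paper's proof: both directions are handled by the same computation, namely that the parallelism condition \eqref{EQ:PAR} is equivalent to $\sum_j M_{ij}T_{ij}=0$, which in turn is the flow condition for $f_{ij}=-M_{ij}T_{ij}$. The only cosmetic difference is that the paper phrases this via the rotated vector $u_i'$ (writing $T_{ij}=u_i\T u_j'$) and splits $M(u,f,g)=M(u,f,0)+M(u,0,g)$, whereas you work directly with $\det(u_i,u_j)$ and split the sum over neighbours; the content is identical.
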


\begin{proof}
First, we prove that $M(u,f,g)\in\MM_u$ for every circulation on
$(V,A_u)$ and every $g:~E_u\to\R$. Using that
$M(u,f,g)=M(u,f,0)+M(u,0,g)$, it suffices to prove that
$M(u,f,g)\in\MM_u$ if either $g=0$ or $h=0$. If $M=M(u,f,0)$, then
using that $f$ is a circulation, we have
\[
\Bigl(\sum_j M_{ij}u_j\Bigr)\T u_i' = \sum_j f_{ij} = 0.
\]
This means that $\sum_jM_{ij}u_j\T$ is orthogonal to $u_i'$, and so
parallel to $u_i$. As remarked above, this means that
$M(u,f,0)\in\MM_u$. If $M=M(u,0,g)$, then for every $i\in V$,
\[
\sum_{j\in N(i)} M_{ij}u_j = \sum_{j:\,ij\in E_u} g(ij) u_j
\]
This vector is clearly parallel to $u_i$, proving that
$M(u,0,g)\in\MM_u$.

Second, given a matrix $M\in\MM_u$, define $f_{ij}=-T_{ij}M_{ij}$ for
$ij\in A_u$ and $g_{ij}=M_{ij}$ for $ij\in E_u$. Then $f$ is a
circulation. Indeed, for $i\in V$,
\[
\sum_{ij\in A_u} f_{ij} = -\sum_{ij\in A_u} M_{ij}u_j\T u_i'
= -\sum_{j\in V} M_{ij}u_j\T u_i' = \Bigl(-\sum_{j\in V} M_{ij}u_j\Bigr)\T u_i' = 0.
\]
Furthermore, $M(u,f,g)=M$ by simple computation.
\end{proof}

Note that the $G$-matrix $M(u,f,g)$ is well-signed if and only if $f$
is a positive circulation on $(V,A_u)$ and $g<0$. Thus,

\begin{corollary}\label{COR:DECOMP2}
Let $G=(V,E)$ be a graph, let $u:~V\to\R^2$ be a representation of
$V$ by nonzero vectors. Then
\begin{align*}
\WW_u = \bigl\{M(u,f,g):~&\text{\rm $f$ is a positive circulation on}~(V,A_u),~\\
&g:~E_u\to\R,\ g<0\bigr\}.
\end{align*}
\end{corollary}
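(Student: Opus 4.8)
The plan is to derive this corollary directly from Lemma~\ref{LEM:DECOMP2} together with the sign remark immediately preceding it; essentially no new work is needed beyond a careful bookkeeping of signs.

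First I would recall that $\WW_u$ is by definition the set of \emph{well-signed} $G$-matrices lying in $\MM_u$, i.e.\ those $M\in\MM_u$ with $M_{ij}<0$ for every edge $ij\in E$. By Lemma~\ref{LEM:DECOMP2}, every $M\in\MM_u$ can be written (uniquely) as $M=M(u,f,g)$ for a circulation $f$ on $(V,A_u)$ and a function $g:~E_u\to\R$. Since $E$ is partitioned into $A_u$ and $E_u$, the well-signedness condition splits into a condition on the arcs of $A_u$ and a condition on the edges of $E_u$, which can be verified separately; the diagonal entries play no role, being determined by \eqref{8ok15d} and unconstrained by well-signedness.

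Next I would carry out the two-sided sign check. For $ij\in A_u$ we have, by the definition of $A_u$, that $T_{ij}=\det(u_i,u_j)>0$, while $M(u,f,g)_{ij}=-f_{ij}/T_{ij}$; hence $M_{ij}<0$ if and only if $f_{ij}>0$. Requiring this for every arc $(i,j)\in A_u$ is exactly the statement that $f$ is a \emph{positive} circulation on $(V,A_u)$. For $ij\in E_u$ we have $M(u,f,g)_{ij}=g(ij)$, so $M_{ij}<0$ for all such edges if and only if $g<0$. Combining the two, $M(u,f,g)\in\WW_u$ if and only if $f$ is a positive circulation on $(V,A_u)$ and $g<0$, which is precisely the claimed description of $\WW_u$.

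The only point requiring a moment's care — and the single potential pitfall — is the orientation bookkeeping: one must use that $(i,j)\in A_u$ forces $T_{ij}>0$ (and not $<0$), so that the sign of $-f_{ij}/T_{ij}$ is opposite to that of $f_{ij}$, and that for an arc $(i,j)\in A_u$ the reversed pair $(j,i)$ is not in $A_u$, so that ``positive circulation on $(V,A_u)$'' unambiguously means positivity on the forward arcs. With this settled, the corollary follows at once.
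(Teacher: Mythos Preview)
Your argument is correct and matches the paper's approach exactly: the paper simply notes, just before stating the corollary, that $M(u,f,g)$ is well-signed if and only if $f$ is a positive circulation on $(V,A_u)$ and $g<0$, and then writes ``Thus,'' before the statement. Your proof is a faithful (and slightly more detailed) spelling-out of that one-line observation combined with Lemma~\ref{LEM:DECOMP2}.
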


In particular, it follows that $\WW_u\neq\emptyset$ if and only if
$A_u$ carries a positive circulation. This happens if and only if
each arc in $A_u$ is contained in a directed cycle in $A_u$; that is,
if and only if each component of the directed graph $(V,A_u)$ is
strongly connected.

The signature of eigenvalues of $M(u,f,g)$ is a more difficult
question, but we can say something about $M(u,0,g)$ if $g<0$. Let $H$
be a connected component of the graph $(V,E_u)$, and let $M_H$ be the
submatrix of $M(u,0,g)$ formed by the rows and columns whose index
belongs to $V(H)$. Then $M_H$ is a well-signed $H$-matrix. The
vectors $u_i$ representing nodes $i\in V(H)$ are contained in a
single line through the origin. Lemma \ref{PROP:CONVHULL1} implies
that $M_H$ has at least one negative eigenvalue unless $u(V(H))$ is
contained in a semiline starting at the origin. Let us call such a
component {\it degenerate}. Then we can state:

\begin{lemma}\label{LEM:MUHG}
Let $u:~V\to\R^2$ be a representation of $V$ with nonzero vectors,
and let $g:~E_u\to\R$ be a function with negative values. Then the
number of negative eigenvalues of $M(u,0,g)$ is at least the number
of non-degenerate components of $(V,E_u)$.
\end{lemma}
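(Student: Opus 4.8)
The plan is to exploit the block structure of $M := M(u,0,g)$. First I would observe that, since $f=0$, the off-diagonal entries of $M$ are $M_{ij}=g(ij)$ for $ij\in E_u$ and $M_{ij}=0$ for every other pair $i\ne j$, while the diagonal entry $M_{ii}$ prescribed by \eqref{8ok15d} only involves those rows $j$ with $ij\in E_u$. Consequently $M$ is block-diagonal with respect to the partition of $V$ into the vertex sets $V(H)$ of the connected components $H$ of $(V,E_u)$ (a vertex isolated in $(V,E_u)$ forming a singleton block on which $M$ vanishes). Since the negative inertia of a block-diagonal symmetric matrix is the sum of the negative inertias of its blocks, it suffices to show that the principal submatrix $M_H$ indexed by $V(H)$ has at least one negative eigenvalue whenever $H$ is non-degenerate.

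Next I would reduce each such $M_H$ to the one-dimensional situation covered by Lemma \ref{PROP:CONVHULL1}. All vectors $u_i$ with $i\in V(H)$ lie on one line through the origin, so I can write $u_i=t_iw$ with a fixed $w\ne 0$ and scalars $t_i\ne 0$. From $M\in\MM_u$ one has $\sum_jM_{ij}u_j=0$ for each $i$; restricting to $i\in V(H)$ and using the block structure this reads $\bigl(\sum_{j\in V(H)}M_{ij}t_j\bigr)w=0$, so the nowhere-zero vector $t=(t_i)_{i\in V(H)}$ lies in $\ker M_H$, and in particular $\cork M_H\ge 1$. Moreover $M_H$ is a well-signed $H$-matrix: its off-diagonal entries are the negative numbers $g(ij)$ on the edges of the connected graph $H$, and $0$ otherwise. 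Now if $M_H$ had no negative eigenvalue it would be positive semidefinite, and Lemma \ref{PROP:CONVHULL1}(a), applied to $M_H$ with a basis of $\ker M_H$ in the role of $U$, would force $\cork M_H=1$; then $t$ spans $\ker M_H$, so all entries of $t$ are nonzero of the same sign, and hence $u(V(H))$ lies on a single semiline from the origin, i.e. $H$ is degenerate. Therefore $M_H$ has a negative eigenvalue, and summing over the non-degenerate components yields the asserted bound on the number of negative eigenvalues of $M$.

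I do not expect a genuine obstacle: the argument is simply the assembly of the block decomposition with the $d=1$ case of Lemma \ref{PROP:CONVHULL1}, and the discussion preceding the lemma already records that $M_H$ is a well-signed $H$-matrix and that its vectors lie on a line through the origin. The one point requiring a little care is to check that collapsing that line to $\R$ produces exactly the one-dimensional representation to which Lemma \ref{PROP:CONVHULL1} applies, and that $\cork M_H\ge 1$ so the lemma is applicable at all — both of which follow from the displayed computation above.
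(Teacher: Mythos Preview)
Your proposal is correct and follows essentially the same approach as the paper: the paragraph immediately preceding the lemma already records that $M(u,0,g)$ decomposes into blocks $M_H$ over the components $H$ of $(V,E_u)$, that each $M_H$ is a well-signed $H$-matrix whose associated vectors lie on a line through the origin, and that Lemma~\ref{PROP:CONVHULL1} forces a negative eigenvalue in each non-degenerate block. Your write-up simply makes explicit the block-diagonality, the reduction $u_i=t_iw$ to a one-dimensional nullspace vector, and the verification that $\cork M_H\ge 1$, all of which the paper leaves to the reader.
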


\subsection{Shifting the origin}

For a representation $(u_1,\dots, u_n)$ in $\R^2$ and $p\in\R^2$, let
us write $u-p$ for the representation $(u_1-p,\dots,u_n-p)$.

Consider the cell complex made by the (two-way infinite) lines
through distinct points $u_i$ and $u_j$ with $ij\in E$. The 1- and
2-dimensional cells are called {\em $1$-cells} and {\em $2$-cells},
respectively. Two cells $c$ and $d$ are {\em incident} if
$d\subseteq\overline c\setminus c$ or
$c\subseteq\overline{d}\setminus{d}$.

Two points $p$ and $q$ belong to the same cell if and only if
$A_{u-p}=A_{u-q}$ and $E_{u-p}=E_{u-q}$. Hence, for any cell $c$, we
can write $A_c$ and $E_c$ for $A_{u-p}$ and $E_{u-p}$, where $p$ is
an arbitrary element of $c$. For any cell $c$, set
$\WW_c:=\bigcup_{p\in c}\WW_{u-p}$. It follows by Lemma
\ref{LEM:DECOMP2} that if $\WW_c\not=\emptyset$, then
$\WW_{u-p}\not=\emptyset$ for every $p\in c$. It also follows that
$\WW_c$ is connected for each cell $c$, as it is the range of the
continuous function $M(u-p,f,g)$ on the connected topological space
of triples $(p,f,g)$ where $p\in c$, $f$ is a positive circulation on
$A_c$, and $g$ is a negative function on $E_c$.

The following lemma is an essential tool in the proof.

\begin{lemma}\label{13ok15a}
Let $c$ be a cell with $\WW_c\neq\emptyset$ and let
$q\in\overline{c}$. Then $M(u-q,0,g)\in\overline{\WW}_c$ for some
negative function $g$ on $E_{u-q}$.
\end{lemma}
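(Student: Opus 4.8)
The plan is to realize $M(u-q,0,g)$ as the limit, along a path inside $\WW_c$, of matrices obtained by sliding the shift point along a straight segment inside $c$ towards $q$ while simultaneously scaling down a fixed positive circulation on $A_c$. Throughout we may assume $u_i\neq q$ for all $i$, since otherwise $M(u-q,0,g)$ is not even defined. Since $\WW_c\neq\emptyset$ we have $\WW_{u-p}\neq\emptyset$ for every $p\in c$, hence $A_c=A_{u-p}$ carries a positive circulation; fix such a circulation $\psi$ and fix a point $p_0\in c$. For $t\in(0,1]$ set $p_t:=(1-t)q+tp_0$. The cell $c$ is relatively open and convex (its closure is an intersection of halfplanes/lines of the arrangement), and $q\in\overline c$, so $p_t\in c$ for all $t\in(0,1]$ by the line-segment principle, and $p_t\to q$ as $t\to0$.

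\textbf{Key computation.} For every edge $ij$ the area $T(u-p)_{ij}=\det(u_i-p,u_j-p)$ is an \emph{affine} function of $p$ (the quadratic terms in $p$ cancel), so $T(u-p_t)_{ij}=(1-t)\,T(u-q)_{ij}+t\,T(u-p_0)_{ij}$. An edge $ij\in A_c$ cannot change orientation inside $c$ because the line through $u_i$ and $u_j$ is one of the lines of the arrangement; thus, seen from $q$, the arcs of $A_c$ split into $A_c\cap A_{u-q}$ (for which $T(u-p_t)_{ij}\to T(u-q)_{ij}\neq0$) and a set $B:=A_c\cap E_{u-q}$ of arcs that become collinear only in the limit, for which $T(u-q)_{ij}=0$ and hence $T(u-p_t)_{ij}=t\,T(u-p_0)_{ij}$ with $T(u-p_0)_{ij}>0$. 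Edges of $E_c$ stay collinear for all $p_t$, and altogether $E_{u-q}=E_c\sqcup B$.

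\textbf{Construction of the limiting family.} For $t\in(0,1]$ define $M_t:=M(u-p_t,\ t\psi,\ g_0)$, where $g_0\equiv-1$ on $E_c=E_{u-p_t}$. Since $t\psi$ is a positive circulation on $A_{u-p_t}=A_c$ and $g_0<0$, Corollary~\ref{COR:DECOMP2} gives $M_t\in\WW_{u-p_t}\subseteq\WW_c$. Now let $t\to0$. For $ij\in A_c\cap A_{u-q}$ we get $(M_t)_{ij}=-t\psi_{ij}/T(u-p_t)_{ij}\to0$; for $ij\in B$ we get $(M_t)_{ij}=-t\psi_{ij}/\bigl(t\,T(u-p_0)_{ij}\bigr)=-\psi_{ij}/T(u-p_0)_{ij}$, a negative constant independent of $t$; for $ij\in E_c$ we have $(M_t)_{ij}=-1$; and since $u_i-p_t\to u_i-q\neq0$, the diagonal entries, determined by the off-diagonal ones via \eqref{8ok15d}, converge as well. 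Defining $g$ on $E_{u-q}=E_c\sqcup B$ by $g(ij)=-\psi_{ij}/T(u-p_0)_{ij}$ for $ij\in B$ and $g(ij)=-1$ for $ij\in E_c$ — a negative function — we obtain $M_t\to M(u-q,0,g)$, and since $\overline{\WW}_c$ is closed this proves $M(u-q,0,g)\in\overline{\WW}_c$.

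\textbf{Main difficulty.} The routine parts are the affineness identity for $T(u-p)_{ij}$ and the convergence of the diagonal via \eqref{8ok15d}; the one point that must be gotten exactly right is the bookkeeping of the edge set as seen from $q$ — that an arc of $A_c$ either keeps its orientation (so its entry vanishes) or degenerates to a line through $q$, in which case its entry stabilizes to a nonzero constant precisely because $\psi_{ij}$ is damped at the same rate $t$ at which $T(u-p_t)_{ij}$ shrinks, this rate being uniform along the segment thanks to affineness. Choosing the shift path to be a \emph{straight} segment to $q$ (rather than an arbitrary curve) is what makes this matching rate exact and the argument clean.
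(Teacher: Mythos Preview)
Your proof is correct and follows essentially the same approach as the paper: pick $p\in c$, slide along the segment $p_t=(1-t)q+tp$ while scaling a fixed positive circulation on $A_c$ by the parameter $t$, and use the affineness of $T(u-p)_{ij}$ in $p$ to see that entries over $A_c\cap E_{u-q}$ stabilize to negative constants while those over $A_c\cap A_{u-q}$ vanish. The paper's proof is the same construction with only cosmetic differences (it takes the circulation and the function on $E_c$ from a fixed matrix $M\in\WW_{u-p}$ rather than choosing them directly), and your explicit statement of the affineness identity makes the rate-matching step slightly more transparent.
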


\begin{proof}
Choose any $p\in c$. Note that $q\in\overline c$ implies that
$E_{u-p}\subseteq E_{u-q}$. Let $M\in\WW_{u-p}$, then by Lemma
\ref{LEM:DECOMP2} we can write $M=M(u-p,h,g')$ with some positive
circulation $h$ on $A_{u-p}$ and negative function $g'$ on $E_{u-p}$.
Define $g(ij)=M_{ij}$ for $ij\in E_{u-q}$. For $\alpha\in(0,1]$,
define $p_\alpha=(1-\alpha)q+\alpha p$, and consider the $G$-matrices
$M_\alpha=M(u-p_\alpha,\,\alpha h,\,g')$. Clearly $M_\alpha\in\WW_c$.
It suffices to prove that
\begin{equation}\label{13ok15b}
M_\alpha\to M(u-q,0,g)\qquad (\alpha\to 0).
\end{equation}

Consider any position $(i,j)$ with $i\neq j$. If $ij\in E_{u-p}$,
then the $(i,j)$ matrix entries in $M_\alpha$ and $M(u-q,0,g)$ are
both equal to $g'(ij)$, independently of $\alpha$. If $ij\not\in
E_{u-p}$, then for each $\alpha\in(0,1]$ we have $ij\not\in
E_{u-\alpha p}$, and
\begin{equation}\label{7ok15c}
(M_\alpha)_{ij} =\frac{-\alpha h_{ij}}{T(u-p_{\alpha})_{ij}}.
\end{equation}
If $ij\in E_{u-q}\setminus E_{u-p}$, then there is a line through
$u_i$, $u_j$, and $q$. Hence $T(u-p_{\alpha})_{ij}=\alpha
T(u-p)_{ij}$ for each $\alpha\in(0,1]$, and so
\[
(M_\alpha)_{ij} =\frac{-h_{ij}}{T(u-p)_{ij}} =M_{ij}.
\]
If $ij\not\in E_{u-q}$, then (\ref{7ok15c}) implies that
$(M_\alpha)_{ij}\to 0$ as $\alpha\to0$, since $\lim_{\alpha\to
0}T(u-p_{\alpha})_{ij}=T(u-q)_{ij}\not=1$.

So \eqref{13ok15b} holds on all off-diagonal positions. By
\eqref{8ok15d}, it holds for the diagonal entries as well.
\end{proof}

\begin{corollary}\label{13ok15all}
Let $c$ be a cell with $\WW_c\neq\emptyset$ and $q\in\overline{c}$.
Then for every matrix $M\in\WW_{u-q}$ there is a matrix
$M'\in\WW_{u-q}\cap\overline{\WW}_c$ that differs from $M$ only on
entries corresponding to edges in $E_{u-q}$ and on the diagonal
entries.
\end{corollary}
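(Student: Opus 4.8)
The plan is to deduce Corollary \ref{13ok15all} from Lemma \ref{13ok15a} by combining the matrix $M(u-q,0,g)$ it produces with the given matrix $M\in\WW_{u-q}$. First I would invoke Lemma \ref{LEM:DECOMP2} to write $M=M(u-q,f,g'')$ for some positive circulation $f$ on $A_{u-q}$ and some negative function $g''$ on $E_{u-q}$. Lemma \ref{13ok15a} supplies a negative function $g$ on $E_{u-q}$ with $M(u-q,0,g)\in\overline{\WW}_c$; say $M(u-q,0,g)=\lim_k M_k$ with $M_k\in\WW_c$, and write $M_k=M(u-p_k,f_k,g_k)$ with $p_k\in c$, $f_k$ a positive circulation on $A_c$, $g_k<0$ on $E_c$.

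The key observation is that $A_c\subseteq A_{u-q}$ and $E_{u-p}\subseteq E_{u-q}$ for $p\in c$ (since $q\in\overline c$), so the circulation $f$ on $A_{u-q}$ restricts to $A_c$, and $f_k$ extends by zero to a skew-symmetric function supported on $A_{u-q}$ — though this extension need not be a circulation on the larger support. The cleaner route is to form, for each $k$, the convex combination $M'_k := M(u-p_k,\,(1-t_k)f_k + t_k f|_{A_c},\, g_k)$ for a suitable $t_k\to 1$, but the target is in $\WW_{u-q}$, not $\WW_c$, so this does not immediately land. Instead I would argue directly: define $M' := M(u-q,\,f,\,g)$, i.e., keep the circulation part $f$ of $M$ but replace the $E_{u-q}$-part $g''$ by the $g$ from Lemma \ref{13ok15a}. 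Then $M'\in\WW_{u-q}$ by Corollary \ref{COR:DECOMP2} (since $f$ is still a positive circulation on $A_{u-q}$ and $g<0$), and $M'$ differs from $M$ only on $E_{u-q}$-entries and, via \eqref{8ok15d}, on the diagonal — which is exactly the claim once we show $M'\in\overline{\WW}_c$.

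To see $M'\in\overline{\WW}_c$, I would run essentially the same limiting construction as in the proof of Lemma \ref{13ok15a}, but carrying the circulation $f$ along: pick $p\in c$, set $p_\alpha=(1-\alpha)q+\alpha p$, and consider $M_\alpha := M(u-p_\alpha,\,f_\alpha,\,g'')$ where $f_\alpha$ is built from $f|_{A_c}$ scaled so that the resulting off-diagonal entries converge — the same scaling $-\alpha h_{ij}/T(u-p_\alpha)_{ij}$ bookkeeping shows entries for edges in $E_{u-q}\setminus E_c$ stabilize to values determined by $f$, entries outside $E_{u-q}$ go to $0$, and entries in $E_c$ stay fixed. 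One must check that $f$ actually supplies a positive circulation on $A_c$ for $\alpha$ small; this holds because $A_c\subseteq A_{u-q}$ and $f>0$ on $A_{u-q}$, and the circulation (flow-conservation) condition on the smaller graph $A_c$ can be restored by adding a small positive circulation supported on $A_c$ — available since, as noted after Corollary \ref{COR:DECOMP2}, $\WW_c\neq\emptyset$ forces each component of $(V,A_c)$ to be strongly connected. Taking $\alpha\to 0$ yields $M'\in\overline{\WW}_c$.

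The main obstacle I anticipate is precisely this reconciliation between the circulation $f$ living on the larger arc set $A_{u-q}$ and the circulation condition required on the smaller arc set $A_c$: the restriction $f|_{A_c}$ need not be flow-conserving, so one cannot simply reuse $f$ inside $\WW_c$. The fix — perturbing $f|_{A_c}$ by a small positive circulation on $A_c$, which exists by strong connectivity of the components of $(V,A_c)$, and letting that perturbation vanish in the limit — is the heart of the argument; everything else is the routine limit bookkeeping already carried out in Lemma \ref{13ok15a} together with an application of \eqref{8ok15d} to handle the diagonal. A mild alternative, if one wants to avoid re-deriving the limit, is to quote Lemma \ref{13ok15a} as a black box to get $M(u-q,0,g)\in\overline{\WW}_c$ and then observe that $\overline{\WW}_c$ is closed under adding $M(u-q,f,0)$ in the limit via the same $\alpha$-scaling, but this essentially repackages the same computation.
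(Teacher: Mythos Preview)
Your target matrix $M'=M(u-q,f,g)$ is correct, and the final ``alternative'' you mention is in fact the paper's proof. The difficulty you wrestle with in between is self-inflicted: you have the inclusion backwards. From $q\in\overline c$ one gets $E_c\subseteq E_{u-q}$ (as you note), and hence the \emph{opposite} inclusion on the arc sets, namely $A_{u-q}\subseteq A_c$, with the orientations agreeing. So there is no need to ``restrict $f$ to $A_c$'' and then worry about lost flow conservation: $f$ is already a circulation on the full graph, supported on $A_{u-q}\subseteq A_c$, hence a (not necessarily positive) circulation on $(V,A_c)$ with $f\ge 0$ on $A_c$.

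With the inclusion corrected, the argument is one line. Take the sequence $M_k\in\WW_{u-p_k}$ with $M_k\to M(u-q,0,g)$ supplied by Lemma~\ref{13ok15a}. For each $k$, the matrix $M(u-p_k,f,0)$ lies in $\MM_{u-p_k}$ by Lemma~\ref{LEM:DECOMP2}, and all its off-diagonal entries are $\le 0$ (they equal $-f_{ij}/T(u-p_k)_{ij}$ on $A_c$, which is $\le 0$ since $f_{ij}\ge 0$ and $T(u-p_k)_{ij}>0$; they are $0$ on $E_c$). Therefore $M_k+M(u-p_k,f,0)$ is still well-signed, hence in $\WW_{u-p_k}\subseteq\WW_c$. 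Since $p_k\to q$ and $T(u-p_k)_{ij}\to T(u-q)_{ij}\neq 0$ for $ij\in A_{u-q}$, we get $M(u-p_k,f,0)\to M(u-q,f,0)$, and so $M_k+M(u-p_k,f,0)\to M(u-q,0,g)+M(u-q,f,0)=M(u-q,f,g)=M'$. Thus $M'\in\overline{\WW}_c$. No perturbation by an auxiliary positive circulation on $A_c$ is needed; the strong-connectivity fix you propose addresses a problem that does not arise once the inclusion is set right.
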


\begin{proof}
By Lemma \ref{LEM:DECOMP2} we can write $M=M(u-q,f,g)$ with some
positive circulation $f$ on $A_{u-q}$ and negative function $g$ on
$E_{u-q}$. By Lemma \ref{13ok15a}, there is a negative function $g'$
on $E_{u-q}$ such that $M(u-q,0,g')\in\overline{\WW}_c$. There are
points $p_k\in c$ and matrices $M_k\in\WW_{u-p_k}$ such that $M_k\to
M(u-q,0,g')$ as $k\to\infty$. Then $M_k+M(u-p_k,f,0)$ belongs to
$\WW_{u-p_k}$ and $M_k+M(u-p_k,f,0)\to M(u-q,0,g') + M(u-q,f,0) =
M(u-q,f,g')$ as $k\to\infty$, showing that $M'=M(u-q,f,g')$ belongs
to $\overline{\WW}_c$. Furthermore, $M-M'=M(u-q,0,g-g')$ is nonzero
on entries in $E_{u-q}$ and on the diagonal entries only.
\end{proof}

\begin{corollary}\label{13ok15bx}
If $c$ and $d$ are incident cells, then $\WW_c\cup\WW_{d}$ is
connected.
\end{corollary}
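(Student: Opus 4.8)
The plan is to derive Corollary~\ref{13ok15bx} almost immediately from Corollary~\ref{13ok15all}, using only the connectedness of the individual sets $\WW_c$ and $\WW_d$ (recorded just before Lemma~\ref{13ok15a}, where each $\WW_e$ is exhibited as a continuous image of the connected space of triples $(p,f,g)$) together with the elementary topological fact that if $A$ and $B$ are connected subsets of a topological space with $\overline A\cap B\neq\emptyset$, then $A\cup B$ is connected (here the ambient space is the space of symmetric $V\times V$ matrices).

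First I would dispose of the trivial cases: if $\WW_c=\emptyset$ then $\WW_c\cup\WW_d=\WW_d$ is connected, and symmetrically if $\WW_d=\emptyset$; so I may assume both are nonempty. By the symmetry of the incidence relation I may also assume $d\subseteq\overline c\setminus c$, so that $d\subseteq\overline c$; the reversed case $c\subseteq\overline d\setminus d$ is obtained simply by interchanging the roles of $c$ and $d$.

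Next, choose any point $q\in d$, so that $q\in\overline c$. Since $\WW_d\neq\emptyset$, the remark following Lemma~\ref{LEM:DECOMP2} gives $\WW_{u-q}\neq\emptyset$, and I pick $M\in\WW_{u-q}$. Applying Corollary~\ref{13ok15all} to the cell $c$ and the boundary point $q$, I obtain a matrix $M'\in\WW_{u-q}\cap\overline{\WW}_c$. Since $q\in d$ we have $\WW_{u-q}\subseteq\WW_d$, hence $M'\in\WW_d\cap\overline{\WW}_c$, so $\WW_d\cap\overline{\WW}_c\neq\emptyset$. Combining this with the connectedness of $\WW_c$ and $\WW_d$ and the topological fact above yields that $\WW_c\cup\WW_d$ is connected.

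I do not expect a genuine obstacle here: the substantive work — tracking how $A_{u-p}$ and $E_{u-p}$ behave as the origin approaches a boundary cell, and producing a well-signed matrix on that cell which is a limit of well-signed matrices from the open cell — has already been carried out in Lemma~\ref{13ok15a} and Corollary~\ref{13ok15all}. The only point requiring care is the bookkeeping of which cell plays the role of ``$c$'' when invoking Corollary~\ref{13ok15all}: one must take the cell having the other in its boundary as the cell ``$c$'' of that corollary and choose $q$ in the lower-dimensional cell, and then observe that $\WW_{u-q}\subseteq\WW_d$ so that the matrix the corollary produces indeed lies in $\WW_d$.
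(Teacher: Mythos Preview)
Your argument is correct and follows essentially the same route as the paper: reduce to the case where both $\WW_c$ and $\WW_d$ are nonempty with $d\subseteq\overline c$, pick $q\in d$, invoke Corollary~\ref{13ok15all} to produce a matrix in $\WW_{u-q}\cap\overline{\WW}_c\subseteq\WW_d\cap\overline{\WW}_c$, and conclude connectedness from that of $\WW_c$ and $\WW_d$. You simply spell out a few details (the inclusion $\WW_{u-q}\subseteq\WW_d$ and the elementary topological lemma) that the paper leaves implicit.
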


\begin{proof}
We may assume that $d\subseteq \overline c\setminus c$, and that both
$\WW_c$ and $\WW_{d}$ are nonempty (otherwise the assertion follows
from the connectivity of $\WW_c$ and $\WW_d$).

Choose $q\in d$. Since $\WW_{d}\neq\emptyset$, Corollary
\ref{13ok15all} implies that $\WW_{d}$ and $\overline{\WW}_c$
intersect, and by the connectivity of $\WW_c$ and $\WW_d$, this
implies that $\WW_c\cup\WW_{d}$ is connected.
\end{proof}

Call a segment $\sigma$ in the plane {\em separating}, if $\sigma$
connects points $u_a$ and $u_b$ for some $a,b\in V$, with the
property that $V\setminus\{a,b\}$ can be partitioned into two
nonempty sets $X$ and $Y$ such that no edge of $G$ connects $X$ and
$Y$ and such that the sets $\{u_i\mid i\in X\}$ and $\{u_i\mid i\in
Y\}$ are on distinct sides of the line through $\sigma$. Note that
this implies that $\sigma$ is a $1$-cell.

\begin{lemma}\label{8ok15a}
Let $G$ be a connected graph, and let $\sigma$ be a separating
segment connecting $u_i$ and $u_j$, with incident $2$-cells $R$ and
$Q$. If $\WW_{\sigma}\cup\WW_R\neq\emptyset$, then $A_Q$ contains a
directed circuit traversing $ij$.
\end{lemma}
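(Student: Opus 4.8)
The plan is to argue by contradiction and exploit the structure of the oriented graph $(V,A_Q)$ near the separating segment $\sigma$. Let $R$ and $Q$ be the two $2$-cells incident to $\sigma$, and suppose that $\WW_\sigma\cup\WW_R\neq\emptyset$; by Corollary~\ref{13ok15bx} the set $\WW_\sigma\cup\WW_R$ is connected (they are incident), and since $\sigma\subseteq\overline R\setminus R$, Corollary~\ref{13ok15all} lets me transport any matrix witnessing nonemptiness into $\overline{\WW}_R\cap\WW_\sigma$; in particular $\WW_\sigma\neq\emptyset$, so $A_\sigma$ carries a positive circulation, hence each component of $(V,A_\sigma)$ is strongly connected. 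Now I would compare $A_\sigma$ with $A_Q$: moving the origin from a point $q\in\sigma$ to a nearby point in $Q$, the only edges that change status are those lying on lines through $q$, i.e. the edges of $E_{u-q}\supseteq E_\sigma$; every edge of $A_\sigma$ stays in $A_Q$ (possibly), and the edges of $E_{u-q}$ that become non-collinear acquire a definite orientation in $A_Q$ determined by which side of the line $Q$ lies on. In particular the edge $ij$ (which is collinear with $q$, since $\sigma$ joins $u_i$ and $u_j$) becomes an arc of $A_Q$ with a well-defined direction.

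Next I would use the separating property to locate a directed circuit of $A_Q$ through $ij$. Write $V\setminus\{i,j\}=X\sqcup Y$ with no $X$–$Y$ edges and $u(X),u(Y)$ on opposite sides of the line $\ell$ through $\sigma$. The key observation is that any directed cycle of $A_\sigma$ that uses a vertex of $X$ and a vertex of $Y$ must pass through $\{i,j\}$ twice — once crossing from the $X$-side to the $Y$-side and once back — but a simple directed cycle can visit each of $i,j$ at most once, so it must use both $i$ and $j$, hence it uses either the arc $ij$ or a path of collinear edges between $u_i$ and $u_j$; in either case, after shifting into $Q$, collinear edges on $\sigma$ other than $ij$ may disappear from the arc set, but $ij$ itself survives as an arc of $A_Q$, and the portions of the cycle lying strictly in the open half-planes are unaffected by the shift. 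So the plan is: take the positive circulation on $A_\sigma$, decompose it into directed cycles, pick a cycle $\gamma$ that crosses $\ell$ (one exists because the circulation is everywhere positive and, by $2$-connectivity / the fact that $\sigma$ is separating, there are arcs on both sides), show $\gamma$ must traverse $ij$, and verify that $\gamma$ — or a minor modification of it replacing a collinear sub-path by the single arc $ij$ — is a directed circuit of $A_Q$ traversing $ij$.

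The step I expect to be the main obstacle is the bookkeeping of \emph{collinear} edges: on $\sigma$ there may be several vertices $u_a$ collinear with $u_i,u_j$, so $E_\sigma$ can contain a whole path along $\ell$, and when the origin moves into $Q$ some of these edges flip into $A_Q$ with one orientation and some keep the opposite — I need to check that the net effect still routes a directed circuit through the specific arc $ij$ rather than getting stuck at an intermediate collinear vertex. Handling this cleanly will likely require choosing $q\in\sigma$ generically (so that $E_{u-q}$ is as small as possible, namely exactly the edges forced collinear by $\sigma$) and then arguing that the directed cycle $\gamma$ can be taken to avoid all collinear edges except possibly a single contiguous sub-path between $u_i$ and $u_j$, which after the shift collapses onto the arc $ij$. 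The other ingredients — strong connectivity of components of $(V,A_\sigma)$, the cycle decomposition of a positive circulation, and the sign/side computation for $T(u-q)_{ij}$ versus $T(u-p)_{ij}$ for $p\in Q$ — are routine given Corollary~\ref{COR:DECOMP2} and the definitions.
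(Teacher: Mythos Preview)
Your first reduction step contains a genuine gap. You claim Corollary~\ref{13ok15all} lets you ``transport any matrix witnessing nonemptiness into $\overline{\WW}_R\cap\WW_\sigma$'' and hence conclude $\WW_\sigma\neq\emptyset$. But Corollary~\ref{13ok15all} takes as \emph{input} a matrix $M\in\WW_{u-q}$ (with $q\in\sigma$) and produces another matrix in $\WW_{u-q}\cap\overline{\WW}_R$; it does not manufacture an element of $\WW_\sigma$ from an element of $\WW_R$. In the case where only $\WW_R\neq\emptyset$ you therefore cannot pass to a positive circulation on $A_\sigma$. The implication $\WW_R\neq\emptyset\Rightarrow\WW_\sigma\neq\emptyset$ is exactly the content of Corollary~\ref{14ok15b}, which is \emph{derived from} the present lemma --- so invoking it here would be circular.

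The paper sidesteps this by never reducing to $\WW_\sigma$: it picks $p\in\sigma\cup R$ with $\WW_{u-p}\neq\emptyset$ and works directly with the positive circulation on $A_{u-p}$, noting that $A_{u-p}$ and $A_Q$ differ only on the single edge $ij$. It then locates an edge from $\{i,j\}$ into the $Q$-halfplane (using only connectivity of $G$, not $2$-connectivity as you wrote), takes a directed cycle of $A_{u-p}$ through that edge, and observes that the predecessor on the cycle must lie in the $R$-halfplane; the separating property forces the cycle through the other endpoint, and closing up with the arc $ij$ in $A_Q$ gives the circuit. Your later plan (find a cycle crossing $\ell$, argue it hits both $i$ and $j$) is close in spirit to this, and your collinearity worry is unnecessary: the separating hypothesis already places every $u_k$ with $k\notin\{i,j\}$ strictly off the line $\ell$, so no other edge lies on $\ell$.
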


\begin{proof}
We may assume that $\sigma$ connects $u_1$ and $u_2$, and that edge
$12$ of $G$ is oriented from $1$ to $2$ in $A_Q$. Let $\ell$ be the
line through $\sigma$, and let $H$ and $H'$ be the open halfplanes
with boundary $\ell$ containing $Q$ and $R$, respectively.

Choose $p\in \sigma\cup R$ with $\WW_{u-p}\neq\emptyset$. Note that
$A_Q$ and $A_{u-p}$ differ only for edge $12$. Any edge $ij\neq 12$
has the same orientation in $A_Q$ as in $A_{u-p}$.

Since $H$ contains points $u_i$, since $G$ is connected, and since
$\ell$ crosses no $u_iu_j$ with $ij\in E$, $G$ has an edge $1k$ or
$2k$ with $u_k\in H$. By symmetry, we can assume that $2k$ is an
edge. Then in $A_{u-p}$, edge $2k$ is oriented from $2$ to $k$. As
$\WW_{u-p}\neq\emptyset$, $A_{u-p}$ has a positive circulation. So
$A_{u-p}$ contains a directed circuit $D$ containing $2k$. The edge
preceding $2k$, say $j2$, must have $u_j\in H'$, as $p$ belongs to
$\sigma\cup R$. Therefore, since $\{1,2\}$ separates nodes $k$ and
$j$, $D$ traverses node $1$. So the directed path in $D$ from $2$ to
$1$ together with the edge $12$ forms the required directed circuit
$C$ in $A_{u-q}$.
\end{proof}

\begin{corollary}\label{14ok15b}
Let $G$ be a connected graph, let $\sigma$ be a separating segment,
and let $R$ be a $2$-cell incident with $\sigma$. Then
$\WW_{\sigma}\neq\emptyset$ if and only if $\WW_R\neq\emptyset$.
\end{corollary}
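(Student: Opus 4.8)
The plan is to deduce both implications from the combinatorial description of the cells $c$ with $\WW_c\neq\emptyset$: combining Corollary \ref{COR:DECOMP2} with the remark following it, for any cell $c$ one has $\WW_c\neq\emptyset$ if and only if every arc of the oriented graph $(V,A_c)$ lies on a directed cycle of $A_c$ (equivalently, $A_c$ carries a positive circulation). So I first need to understand how $A_\sigma$ and $A_R$ differ. Write $\sigma$ as connecting $u_i$ and $u_j$, let $\ell$ be the line through $\sigma$, and let $Q$ be the second $2$-cell incident with $\sigma$. Since $\sigma$ is a $1$-cell it lies on a line of the arrangement, and by the separating property $u_i$ and $u_j$ are the only nodes lying on $\ell$; hence that line is the line through the edge $ij$, so in particular $ij\in E$, and $R$ and $Q$ are the two $2$-cells on the two sides of $\ell$.

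The structural claim is that, for a suitable orientation of $ij$, one has $A_R=A_\sigma\cup\{i\to j\}$ and $A_Q=A_\sigma\cup\{j\to i\}$ as oriented graphs. Indeed, for $p\in\sigma$ we have $p\in\overline R\cap\overline Q$, so $E_R,E_Q\subseteq E_\sigma$ and hence $A_\sigma\subseteq A_R$ and $A_\sigma\subseteq A_Q$ as edge sets; an arc $kl\in A_\sigma$ keeps its orientation in $A_R$ because $T(u-p)_{kl}$ is continuous in $p$, is nonzero on all of the connected cell $R$ (as $kl\notin E_R$), and tends to $T(u-q)_{kl}>0$ as $p\to q\in\sigma$. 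Conversely, an edge of $A_R$ that is not in $A_\sigma$ lies in $E_\sigma\setminus E_R$, i.e. both of its endpoints lie on $\ell$; by the separating property the only such edge is $ij$ itself. Finally $ij\in E_\sigma$ (for $q$ in the relative interior of $\sigma$ the vectors $u_i-q$ and $u_j-q$ are antiparallel), while $T(u-p)_{ij}=\det(u_i-p,u_j-p)$ is an affine function of $p$ vanishing exactly on $\ell$, so $ij$ is an arc of both $A_R$ and $A_Q$, with opposite orientations; say it is $i\to j$ in $A_R$.

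Now the two directions. Suppose $\WW_\sigma\neq\emptyset$. Then $A_\sigma$ carries a positive circulation, so every arc of $A_\sigma$ — hence every arc of $A_R$ except possibly $i\to j$ — lies on a directed cycle of $A_\sigma\subseteq A_R$; and applying Lemma \ref{8ok15a} with the names of the two incident $2$-cells interchanged (they play symmetric roles there), the hypothesis $\WW_\sigma\cup\WW_Q\neq\emptyset$ gives a directed circuit of $A_R$ traversing $ij$, i.e. through $i\to j$. So every arc of $A_R$ lies on a directed cycle and $\WW_R\neq\emptyset$. Conversely, suppose $\WW_R\neq\emptyset$. By Lemma \ref{8ok15a}, $A_Q$ has a directed circuit traversing $ij$, i.e. through the arc $j\to i$; deleting that arc from the (simple) circuit leaves a directed $i$-to-$j$ path $P$ inside $A_Q\setminus\{j\to i\}=A_\sigma$. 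Take any arc $a$ of $A_\sigma$; since $a\in A_R$ and $\WW_R\neq\emptyset$, $a$ lies on a directed cycle $C$ of $A_R$. If $C$ avoids $i\to j$, then $C\subseteq A_\sigma$. Otherwise, replacing the unique occurrence of $i\to j$ in $C$ by $P$ yields a closed directed walk in $A_\sigma$ still containing $a$, so the head of $a$ reaches its tail inside $A_\sigma$ and $a$ lies on a directed cycle of $A_\sigma$. Hence $A_\sigma$ carries a positive circulation and $\WW_\sigma\neq\emptyset$.

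The main obstacle is the structural claim $A_R=A_\sigma\cup\{ij\}$ (and its analogue for $Q$): once it is in place, everything reduces to routine manipulation of directed cycles, and the role of Lemma \ref{8ok15a} is precisely to supply the one extra directed cycle through $ij$ needed to match up the two positive-circulation conditions. Inside that claim, the separating hypothesis is what excludes other edges lying along $\ell$, and the slightly delicate point is checking that the arcs common to $A_\sigma$ and $A_R$ are not reoriented — both handled by the continuity/sign argument above.
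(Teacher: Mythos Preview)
Your proof is correct and follows essentially the same approach as the paper's: both directions hinge on Lemma~\ref{8ok15a} to supply a directed circuit through the edge $ij$, and then combine it with the positive-circulation data already available on the other cell. The only cosmetic difference is that the paper manipulates circulations directly (adding the incidence vector of the circuit in one direction, rerouting the flow $f(1,2)$ along $P$ in the other), whereas you translate everything into the equivalent ``every arc lies on a directed cycle'' criterion and make the structural identity $A_R=A_\sigma\cup\{i\to j\}$, $A_Q=A_\sigma\cup\{j\to i\}$ explicit; this is the same argument unpacked.
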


\begin{proof}
Let $\sigma$ connect $u_1$ and $u_2$. If $\WW_{\sigma}\neq\emptyset$,
then $A_{\sigma}$ has a positive circulation $f'$. By Lemma
\ref{8ok15a}, $A_R$ contains a directed circuit $C$ traversing $12$.
Let $f$ be the incidence vector of $C$. Then $f'+f$ is a positive
circulation on $A_R$. So $\WW_R\neq\emptyset$.

Conversely, if $\WW_R\not=\emptyset$, then $A_R$ has a positive
circulation $f$. By Lemma \ref{8ok15a}, $A_R$ contains a directed
cycle through the arc $21$, which gives a directed path $P$ from $1$
to $2$ not using $12$. It follows that by rerouting $f(1,2)$ over
$P$, we obtain a positive circulation on $A_{\sigma}$, showing that
$\WW_{\sigma}\neq\emptyset$.
\end{proof}

\subsection{Outerplanar nullspace embeddings}

Let $G=(V,E)$ be a graph. A mapping $u:~V\to\R^2$ is called {\em
outerplanar} if its extension to the edges gives an embedding of $G$
in the plane, and each $u_i$ is incident with the unbounded face of
this embedding.

\begin{theorem}\label{13ok15d}
Let $G$ be a $2$-connected graph with $\kappa(G)=2$. Then the
normalized nullspace representation defined by any well-signed
$G$-matrix with one negative eigenvalue and with corank $2$ is an
outerplanar embedding of $G$.
\end{theorem}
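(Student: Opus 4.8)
The plan is to argue by induction on $|V|$, using the machinery of shifting the origin (Corollary \ref{13ok15bx} and Lemma \ref{8ok15a}) together with the clique–cut results (Lemma \ref{27ok15b} and Corollary \ref{1NODE}) to force the nullspace representation into an outerplanar position. First, by Corollary \ref{1NODE} every $u_i$ is nonzero, so the representation can be normalized, and by Lemma \ref{PROP:CONVHULL1}(b) the origin lies in the interior of the convex hull of $\{u_i\}$. The first reduction is to show that the $u_i$ are \emph{pairwise distinct}: if $u_i=u_j=v$ with $|J|\ge 2$ for $J=\{k:u_k=v\}$, then by Lemma \ref{27ok15e} the origin lies in $\mathrm{conv}(u(V\setminus J))$; combined with $2$-connectivity and Lemma \ref{27ok15b} (applied to an appropriate clique formed inside $J$, or a careful case analysis on whether $J$ is a clique), one derives a contradiction with $\kappa(G)=2$ — this is where the two-dimensionality is used crucially, since $\kappa(G)=2$ means no good $G$-matrix of corank $3$ exists.

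Next I would show that no two \emph{edges} of $G$ cross in the drawing, i.e.\ the straight-line extension is a planar embedding. The key tool is the cell complex of lines through edge-pairs: if two edges $ab$ and $cd$ crossed, their crossing point $p$ would lie in a cell $c$ with the property that $A_{u-p}$ fails to be strongly connected on each component, or else one produces (via Lemma \ref{8ok15a} and the double-cover type constructions adapted to dimension $2$) a well-signed $G$-matrix in $\WW_{u-p}$ with two negative eigenvalues; interpolating as in Algorithm \ref{ALG:INTER} between this matrix and $M$ along the segment from $0$ to $p$ then yields a well-signed $G$-matrix with one negative eigenvalue and corank $\ge 3$, contradicting $\kappa(G)=2$. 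Here the connectivity/circulation characterization ($\WW_{u-p}\ne\emptyset$ iff each component of $(V,A_{u-p})$ is strongly connected) and Corollary \ref{14ok15b} on separating segments do the bookkeeping of which cells can support well-signed matrices.

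Finally, having a planar straight-line embedding, I would show every $u_i$ is on the outer face. Suppose some node, or more to the point some minimal separating structure, is enclosed: pick a point $p$ just outside the convex hull $\mathrm{conv}(u(V))$, or on its boundary, so that $u-p$ is a representation with $0$ \emph{not} in the interior of the convex hull; by Lemma \ref{PROP:CONVHULL1} the corresponding matrices in $\WW_{u-p}$ (nonempty by Corollary \ref{14ok15b}/\ref{13ok15bx} propagating from cells where $\WW_c\ne\emptyset$) cannot have exactly one negative eigenvalue in a way compatible with corank $2$ unless the hull is a cycle bounding the whole drawing — equivalently every vertex is on the outer face. Walking $p$ from $0$ out through successive incident cells and using Corollary \ref{13ok15bx} to keep $\WW'$-membership, any internal vertex produces a component structure forcing either corank $\ge 3$ or a second negative eigenvalue, again contradicting $\kappa(G)=2$.

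The main obstacle, I expect, is the crossing-freeness step: translating "two edges cross at $p$" into "$(V,A_{u-p})$ has a component that is not strongly connected, \emph{or} $\WW_{u-p}$ contains a matrix with two negative eigenvalues" requires a genuinely two-dimensional analogue of the one-dimensional Double Cover / Double Node constructions, and one must check that the interpolation from such a matrix back to $M$ stays among well-signed $G$-matrices and actually gains corank rather than merely losing the eigenvalue count — the normalization subtleties flagged after fact (2) in the introduction live precisely here. Secondarily, the distinctness step needs care because $J=\{k:u_k=v\}$ need not be a clique, so Lemma \ref{27ok15b} is not immediately applicable and one must first collapse $J$ (as in the $M^\alpha$ construction inside Lemma \ref{27ok15e}) to a single node and argue in the contracted graph, checking $2$-connectivity is preserved or handling the exceptional small cases directly.
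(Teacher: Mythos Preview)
Your proposal has the right instincts but a genuine gap in the crossing-freeness step, and it misses the organizing idea that makes the argument clean. The interpolation you invoke cannot work as stated: Algorithm~\ref{ALG:INTER} requires both matrices to lie in $\WW_u$ for the \emph{same} $u$, whereas your bad matrix lives in $\WW_{u-p}$ and $M$ lives in $\WW_u$; a convex combination of these is not in general annihilated by any single representation, so no corank jump is produced. The paper never interpolates matrices across different base points. Instead it defines $P=\{p\in\R^2\setminus u(V):\WW'_{u-p}\neq\emptyset\}$ and shows $\overline P=K$, the convex hull of $u(V)$. Since $\kappa(G)=2$, every matrix in $\WW'$ has corank exactly $2$, so $\WW'=\WW'\cap\WW_0$ is both open and closed in $\WW$ by Lemma~\ref{SHIFT-NEW}; combined with the connectedness of $\WW_c\cup\WW_d$ for incident cells (Corollary~\ref{13ok15bx}) and Corollary~\ref{14ok15b}, membership in $P$ propagates from the origin across every interior $1$-cell of $K$. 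This single device replaces both your ``walking $p$ out'' and your interpolation, and the induction on $|V|$ you announce is never needed.

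Once $\overline P=K$, both remaining steps fall out directly. For distinctness: if $u_i=u_j=v$, pick $p\in P$ so close to the vertex $v$ of $K$ that $p\notin\mathrm{conv}(u(V)\setminus\{v\})$, and apply Lemma~\ref{27ok15e} to any $M'\in\WW'_{u-p}$ --- your detour through Lemma~\ref{27ok15b} is unnecessary and, as you yourself note, inapplicable since $J$ need not be a clique. For non-crossing: if $p\in\overline P\setminus u(V)$ lay on two diagonals, then $(V,E_{u-p})$ would have at least two non-degenerate components, so by Lemma~\ref{LEM:MUHG} the matrix $M(u-p,0,g)$ has at least two negative eigenvalues for every negative $g$; but Lemma~\ref{13ok15a} places some such matrix in $\overline{\WW}_c$ for a cell $c\subseteq P$, where every matrix has exactly one negative eigenvalue --- contradiction. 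This \emph{is} the two-dimensional analogue of Double Cover you were searching for, realized via components of $(V,E_{u-p})$ rather than an ad hoc construction. Finally, outerplanarity needs no separate argument: the normalized $u_i$ lie on the unit circle, hence each is a vertex of $K$, so once the diagonals do not cross every node is automatically on the outer face.
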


\smallskip
\noindent {\bf Proof.} Let $u$ be such a normalized nullspace
representation (this exists by Corollary \ref{1NODE}). Let $K$ be the
convex hull of $u(V)$. Since all $u_i$ have unit length, each $u_i$
is a vertex of $K$. We define a {\it diagonal} as the line segment
connecting points $u_i\not=u_j$, where $ij\in E$. We don't know at
this point that the points $u_i$ are different and that diagonals do
not cross; so the same diagonal may represent several edges of $G$,
and may consist of several $1$-cells.

Let $P$ denote the set of points $p\in\R^2\setminus u(V)$ with
$\WW'_{u-p}\neq\emptyset$. Clearly, the origin belongs to $P$. Lemma
\ref{PROP:CONVHULL1}(b) implies that

\begin{claim}\label{CLAIM:0}
$P$ is contained in the interior of $K$.
\end{claim}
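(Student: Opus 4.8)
The goal is to prove that the set $P$ of points $p$ with $\WW'_{u-p}\neq\emptyset$ is exactly the interior of $K$ — or rather, the statement here (Claim \ref{CLAIM:0}) is only the easier half: $P\subseteq\mathrm{int}(K)$. In fact this is essentially immediate from Lemma \ref{PROP:CONVHULL1}(b): if $p\in P$, pick $M\in\WW'_{u-p}$; since $M$ is well-signed with one negative eigenvalue and corank $2$, part (b) of that lemma says the origin lies in the \emph{interior} of the convex hull of the columns of a nullspace basis for $M$, i.e.\ the origin is interior to the convex hull of $\{u_i-p:i\in V\}$, which is $K-p$. Hence $p$ is interior to $K$.

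\begin{proof}
Let $p\in P$, and let $M\in\WW'_{u-p}$, so $M$ is a well-signed $G$-matrix with exactly one negative eigenvalue and with $M(u-p)\T=0$ (reading $u-p$ as a $2\times n$ matrix of rank~$2$, since $p\notin u(V)$ and, the representation being a genuine nullspace representation, the vectors $u_i-p$ span $\R^2$). Applying Lemma \ref{PROP:CONVHULL1}(b) to $M$ with the matrix $U$ whose columns are the vectors $u_i-p$, we conclude that the origin is an interior point of the convex hull of $\{u_i-p:~i\in V\}$. That convex hull is precisely $K-p$, so $0\in\mathrm{int}(K-p)$, i.e.\ $p\in\mathrm{int}(K)$. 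Since $p\in P$ was arbitrary, $P\subseteq\mathrm{int}(K)$.
\end{proof}

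The only mild subtlety — and the one point worth flagging — is the rank condition needed to invoke Lemma \ref{PROP:CONVHULL1}(b): that lemma is stated for $U\in\R^{d\times n}$ with $UM=0$ and $\rank(U)=d$. Here $d=2$ and the rows of the $2\times n$ matrix formed by the $u_i-p$ span the nullspace of $M$, which has corank $2$ because $M\in\WW'_{u-p}$ lies in $\WW_{u-p}$; one should note that $u-p$ having full rank $2$ is exactly the hypothesis that $M\in\WW_{u-p}$ (not merely $\WW_{u}$ shifted) — the definition of $\WW_{u-p}$ via $M(u-p,f,g)$ presupposes the $u_i-p$ are nonzero and, for the nullspace to have corank exactly $2$, affinely spanning. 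Since the origin (the case $p=0$) is known to lie in $P$ and the normalized representation has all $u_i$ of unit length hence distinct from $0$, this is consistent. There is no real obstacle: the claim is a one-line consequence of the earlier lemma, and the proof above is complete as written.
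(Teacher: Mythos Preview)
Your proof is correct and follows exactly the paper's approach: the paper's entire justification for this claim is the single sentence ``Lemma \ref{PROP:CONVHULL1}(b) implies that [Claim \ref{CLAIM:0}].'' Your argument spells this out (take $M\in\WW'_{u-p}$, apply Lemma \ref{PROP:CONVHULL1}(b) to get $0\in\mathrm{int}(K-p)$, hence $p\in\mathrm{int}(K)$), which is precisely what is intended. One small remark on your discussion of the rank hypothesis: the cleanest way to see that the $2\times n$ matrix with columns $u_i-p$ has rank $2$ is simply that $K$ has nonempty interior (since $0\in\mathrm{int}(K)$ by Lemma \ref{PROP:CONVHULL1}(b) applied at $p=0$), so the $u_i$ do not all lie on a line, hence neither do the $u_i-p$; the condition $p\notin u(V)$ is not what drives this.
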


\noindent(It will follow below that $P$ is equal to the interior of
$K$.)

Consider again the cell complex into which the diagonals cut $K$. By
the connectivity of the sets $\WW_c$ and by Lemma \ref{SHIFT-NEW},
$P$ is a union of cells.

\begin{claim}\label{CLAIM:1}
$\overline{P}$ cannot contain a point $u_i=u_j$ for two distinct
nodes $i$ and $j$.
\end{claim}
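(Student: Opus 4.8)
The plan is to argue by contradiction, deriving a violation of Lemma \ref{27ok15e}. Suppose $u_i=u_j=:v$ for distinct nodes $i,j$ and that $v\in\overline P$. Since $P\subseteq\R^2\setminus u(V)$ and $v=u_i\in u(V)$, we have $v\notin P$, so there is a sequence $p_k\to v$ with $p_k\in P$; fix $M_k\in\WW'_{u-p_k}$ for each $k$. The first thing I would establish is that $\cork(M_k)=2=\kappa(G)$ and that $l\mapsto u_l-p_k$ is a nullspace representation defined by $M_k$. Indeed, $M_k$ is well-signed with exactly one negative eigenvalue, hence a good $G$-matrix, so $\cork(M_k)\le\kappa(G)=2$. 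On the other hand, by Claim \ref{CLAIM:0} (via Lemma \ref{PROP:CONVHULL1}(b)) the polygon $K$ is full-dimensional, so $u(V)$ lies on no line; therefore the columns $u_l-p_k$ of $U-p_k\one\T$ span $\R^2$, i.e.\ $\rank(U-p_k\one\T)=2$, and since the rows of $U-p_k\one\T$ lie in $\ker(M_k)$ this forces $\cork(M_k)\ge 2$. Hence $\cork(M_k)=2$ and these rows form a basis of $\ker(M_k)$.

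Now I would apply Lemma \ref{27ok15e} to $M_k$, with nullspace representation $l\mapsto u_l-p_k$ and the point $v-p_k\in\R^2$. The set of nodes mapped to $v-p_k$ is exactly $J:=\{l:\ u_l=v\}$, and $|J|\ge 2$ since $i,j\in J$. The lemma yields $0\in\operatorname{conv}\{u_l-p_k:\ l\in V\setminus J\}$, i.e.\ $p_k\in\operatorname{conv}\{u_l:\ l\in V\setminus J\}$. This convex hull is a compact set independent of $k$, so letting $k\to\infty$ gives $v\in\operatorname{conv}\{u_l:\ l\in V\setminus J\}$.

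This is the desired contradiction: $v=u_i$ lies on the unit circle and is therefore a vertex of $K$, while every point $u_l$ with $l\notin J$ is distinct from $v$, and a vertex of a polytope is never a convex combination of the remaining generating points. The only step with real content is the preliminary identification $\cork(M_k)=\kappa(G)$, which is exactly the hypothesis needed to invoke Lemma \ref{27ok15e}; it rests on the normalization of $u$ (forcing $K$ to be full-dimensional, so that no shift $u-p_k$ becomes one-dimensional) together with the bound $\cork\le\kappa(G)$ for good $G$-matrices. Once that is in hand, the argument is a routine translation of convex-hull membership under the origin shift by $p_k$, plus compactness.
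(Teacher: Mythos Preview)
Your proof is correct and follows essentially the same route as the paper: pick $p\in P$ near $v$, apply Lemma~\ref{27ok15e} to the shifted representation $u-p$, and contradict the fact that $v$ is a vertex of $K$. The only cosmetic difference is that the paper selects a single $p\in P$ already lying outside $\text{conv}(u(V)\setminus\{v\})$ (which is possible since $v$ is a vertex of $K$), thereby avoiding your sequence-and-limit step; conversely, you spell out the verification that $\cork(M_k)=\kappa(G)$, which the paper leaves implicit.
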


Indeed, since $u_i=u_j$ is a vertex of the convex hull of $u(V)$, we
can choose $p\in P$ close enough to $v$ so that it is not in the
convex hull of $u(V)\setminus \{v\}$. This, however, contradicts
Lemma \ref{27ok15e}.

\begin{claim}\label{CLAIM:2}
No point $p\in \overline{P}\setminus u(V)$ is contained in two
different diagonals.
\end{claim}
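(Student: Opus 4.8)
The plan is to argue by contradiction: suppose a point $p\in\overline P\setminus u(V)$ lies on two different diagonals $\delta_1$ and $\delta_2$, coming from edges $ab$ and $cd$ respectively (with $\{a,b\}\ne\{c,d\}$). Since $p\in\overline P$, there is a cell $c$ with $\WW_c\ne\emptyset$ (in fact $\WW_c\subseteq\WW'$, since $P$ is a union of cells all lying in $\WW'$) having $p\in\overline c$. First I would apply Corollary \ref{13ok15all} at the point $q=p$: there is a matrix $M'\in\WW_{u-p}\cap\overline{\WW}_c$. Because $\WW_c\subseteq\WW'$ and $\WW'$ is relatively closed in $\WW$ (Lemma \ref{SHIFT-NEW}), $M'$ still has exactly one negative eigenvalue, and $\cork(M')\ge 2$. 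If $\cork(M')\ge 3$ we would already be done in the sense of producing a good $G$-matrix of corank $3$ — but the theorem is stated as an embedding theorem under $\kappa(G)=2$, so in fact $\cork(M')=2$ and $M'$ is a well-signed $G$-matrix with one negative eigenvalue of corank $\kappa(G)$, to which the structural lemmas of Section \ref{26ok15a} apply with shifted origin $u-p$.

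The key observation is that both edges $ab$ and $cd$ lie in $E_{u-p}$: since $p$ is on the diagonal through $u_a,u_b$, the points $u_a-p$ and $u_b-p$ are on a common line through the origin, so $T(u-p)_{ab}=0$; similarly $T(u-p)_{cd}=0$. Thus in the graph $(V,E_{u-p})$ the edges $ab$ and $cd$ both appear, lying on two distinct lines through the new origin $0$ (distinct because the two diagonals are different and $p\notin u(V)$ ensures none of $u_a-p,\dots,u_d-p$ is $0$). Now I would invoke Lemma \ref{LEM:MUHG}: write $M'=M(u-p,f,g)+M(u-p,0,g')$ — more precisely, decompose $M'$ via Lemma \ref{LEM:DECOMP2} as $M(u-p,f,g)$ and compare its negative-eigenvalue count with that of the $E_{u-p}$-part. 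The two components of $(V,E_{u-p})$ containing the edge $ab$ and the edge $cd$ are on two different lines through the origin; if these are two distinct non-degenerate components, Lemma \ref{LEM:MUHG} gives $M(u-p,0,g)$ at least two negative eigenvalues, and then Lemma \ref{ADD1} applied with $A$ the block $M(u-p,0,g)$ and the complementary block of $M(u-p,f,0)$ — or, more simply, a direct interpolation argument between $M'$ and $M(u-p,0,g)$ inside $\WW_{u-p}$ using the connectivity in Corollary \ref{COR:DECOMP2} — forces a matrix in $\WW_{u-p}$ with two negative eigenvalues, contradicting that every matrix of $\WW_{u-p}$ obtained this way lies in $\WW'$ (since $\WW_c\cup\WW_{u-p}$ is connected through $\overline{\WW}_c$, hence all in $\WW'$).

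The main obstacle is handling degeneracy: a component of $(V,E_{u-p})$ could be \emph{degenerate}, i.e. all its representing vectors lie on a single semiline from the origin, in which case Lemma \ref{LEM:MUHG} gives no negative eigenvalue from it. But degeneracy of the component containing $ab$ would force $u_a-p$ and $u_b-p$ to point the \emph{same} direction from $0$, i.e. $p$ lies strictly outside the segment $[u_a,u_b]$ on its extension; then $p$ is not in the relative interior of the diagonal $ab$. Here I would use $2$-connectivity together with Lemma \ref{27ok15e} (or a convex-position argument as in Claim \ref{CLAIM:1}): if $p\in\overline P$ lies outside $K$'s relevant region we contradict Claim \ref{CLAIM:0}, and if the degenerate semiline configuration occurs one shows $p$ must coincide with one of $u_a,u_b$ (using that $u_a,u_b$ are vertices of $K$ and $p\in\overline K$), contradicting $p\notin u(V)$. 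So after ruling out degeneracy, the two distinct lines through $0$ give two distinct non-degenerate components, Lemma \ref{LEM:MUHG} yields $\ge 2$ negative eigenvalues, and we reach the contradiction with $M'\in\WW'$. I expect the degeneracy bookkeeping — and the careful case distinction according to whether $p$ is in the relative interior of each diagonal or on its extension — to be the only delicate point; everything else is a direct application of Corollary \ref{13ok15all}, Lemma \ref{LEM:MUHG}, and Lemma \ref{SHIFT-NEW}.
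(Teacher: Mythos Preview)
Your core idea --- reduce to Lemma~\ref{LEM:MUHG} by counting non-degenerate components of $(V,E_{u-p})$ --- is exactly right, but you are taking a detour that introduces both unnecessary work and a genuine gap.

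The gap is in your first step. To invoke Corollary~\ref{13ok15all} at $q=p$ you must already have some matrix $M\in\WW_{u-p}$, i.e.\ you need $\WW_{u-p}\neq\emptyset$. Nothing so far guarantees this: $p$ may sit in a $0$-cell of the arrangement where $A_{u-p}$ fails to carry a positive circulation. The paper avoids this entirely by applying Lemma~\ref{13ok15a} directly (not the corollary): for any cell $c\subseteq P$ with $p\in\overline c$, Lemma~\ref{13ok15a} produces a negative function $g$ on $E_{u-p}$ with $M(u-p,0,g)\in\overline{\WW}_c$. Since every matrix in $\WW_c$ has exactly one negative eigenvalue, the limit $M(u-p,0,g)$ has \emph{at most} one. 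Lemma~\ref{LEM:MUHG} then says $(V,E_{u-p})$ has at most one non-degenerate component, and you are done. There is no need to manufacture a well-signed $M'$, decompose it, and then interpolate back toward $M(u-p,0,g)$; the lemma hands you the relevant matrix directly.

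Your degeneracy worry is also unnecessary. A \emph{diagonal} is by definition the segment $[u_a,u_b]$, so $p$ being contained in the diagonal and $p\notin u(V)$ forces $u_a-p$ and $u_b-p$ to point in opposite directions along their common line through the origin; hence the component of $(V,E_{u-p})$ containing the edge $ab$ is automatically non-degenerate. Two distinct diagonals through $p$ lie on two distinct lines through $p$ (again because $p\notin u(V)$), so they sit in distinct components. That is the whole argument --- no case analysis on extensions, no appeal to Lemma~\ref{27ok15e} or Lemma~\ref{ADD1}.
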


Indeed, consider any cell $c\subseteq P$ with $p\in\overline c$.
Since $\WW_c\neq\emptyset$, Lemma \ref{13ok15a} implies that there is
a negative function $g$ on $E_{u-p}$ such that
$M(u-p,0,g)\in\overline{\WW}_c$. As all matrices in $\WW_c$ have
exactly one negative eigenvalue, $M(u-p,0,g)$ has at most one
negative eigenvalue. Lemma \ref{LEM:MUHG} implies that $(V,E_{u-p})$
has at most one non-degenerate component. But every diagonal
containing $p$ is contained in a non-degenerate component of
$(V,E_{u-p})$, and these components are different for different
diagonals, so $p$ can be contained in at most one diagonal. This
proves Claim \ref{CLAIM:2}.

It is easy to complete the proof now. Clearly, $P$ is bounded by one
or more polygons. Let $p$ be a vertex of $\overline{P}$, and assume
that $p\notin u(V)$. Then $p$ belongs to two diagonals (defining the
edges of $P$ incident with $p$), contradicting Claim \ref{CLAIM:2}.
Thus all vertices of $P$ are contained in $u(V)$. This implies that
$\overline{P}$ is a convex polygon spanned by an appropriate subset
of $u(V)$.

To show that $\overline{P}=K$, assume that the boundary of $P$ has an
edge $\sigma$ contained in the interior of $K$ and let $R\subseteq P$
be a $2$-cell incident with $\sigma$, and let $Q$ be the $2$-cell
incident with $\sigma$ on the other side. Clearly,
$\WW_R\neq\emptyset$, and by Corollary \ref{14ok15b},
$\WW_{\sigma}\neq\emptyset$ and by the same Corollary,
$\WW_Q\neq\emptyset$. The sets $\WW_{\sigma}\cup\WW_R$ and
$\WW_{\sigma}\cup\WW_Q$ are connected by Corollary \ref{13ok15bx},
and hence so is $\WW_{\sigma}\cup\WW_R\cup\WW_Q$. We also know that
$\WW'\cap\WW_R\neq\emptyset$. Since $\WW'$ is open and closed in
$\WW$ (Lemma \ref{SHIFT-NEW}, note that in this case
$\WW'=\WW'\cap\WW_0$ as $\kappa(G)=2$), we conclude that
$\WW'\cap\WW_Q\neq\emptyset$, i.e., $Q\subseteq P$. But this
contradicts the definition of $\sigma$.

Thus $P$ is equal to the interior of $K$. Claim \ref{CLAIM:1} implies
that the points $u_i$ are all different, and Claim \ref{CLAIM:2}
implies that the diagonals do not cross.\bx

\subsection{Algorithm}

The considerations in this section give rise to a polynomial
algorithm achieving the following.

\begin{alg}\label{ALG:MAIN2} ~

{\it Input:} A 2-connected graph $G=(V,E)$.

{\it Output:} Either an outerplanar embedding $u:~V\to\R^2$ of $G$,
or a well-signed $G$-matrix with one negative eigenvalue and corank
at least $3$.
\end{alg}

The algorithm progresses along the same lines as the algorithm in
Section \ref{SEC:ALG1}, with auxiliary algorithms analogous to those
in Section \ref{SEC:ALG-AUX}. We omit the details.

\begin{remark}\label{REM:POLYTOPE}
Suppose that the input to our algorithm is a $3$-connected planar
graph. Then the algorithm outputs a well-signed $G$-matrix with one
negative eigenvalue and corank at least $3$. Computing the nullspace
representation defined by this matrix, and performing node-scaling as
described in \cite{L}, we get a representation of $G$ as the skeleton
of a $3$-polytope.
\end{remark}

\bigskip \noindent {\em Acknowledgement.} We thank Bart Sevenster for
helpful discussion on $\kappa(G)$.

\end{document}